\theoremstyle{plain}
\newtheorem{theorem}{Theorem}[section]
\newtheorem{lemma}[theorem]{Lemma}
\newtheorem{claim}[theorem]{Claim}
\newtheorem{corollary}[theorem]{Corollary}
\newtheorem{proposition}[theorem]{Proposition}
\newtheorem{question}[theorem]{Question}
\newtheorem{problem}[theorem]{Problem}
\newtheorem{fact}[theorem]{Fact}
\theoremstyle{definition}\newtheorem{definition}[theorem]{Definition}
\theoremstyle{definition}
\theoremstyle{definition}\newtheorem{remark}[theorem]{Remark}
\numberwithin{equation}{section}
\newcommand{\N}{\mathbb{N}}
\newcommand{\om}{\mathbb{N}}
\newcommand{\omm}{[\mathbb{N}]^\mathbb{N}}
\newcommand{\oom}{\mathbb{N}^\mathbb{N}}
\newcommand{\bd}{\begin{definition}}
	\newcommand{\ed}{\end{definition}}
\DeclareMathOperator{\proj}{proj}
\DeclareMathOperator{\ran}{ran}
\newcommand{\mc}{\mathcal}
\newcommand{\mb}{\mathbf}
\newcommand{\bs}{\mathbf{\Sigma}^1_1}
\newcommand{\bbg}{\mathbf{\Gamma}}
\newcommand{\bbo}{\mathbf{\Delta}^1_1}
\newcommand{\bp}{\mathbf{\Pi}^1_1}
\newcommand{\ls}{\Sigma^1_1}
\newcommand{\lbo}{\Delta^1_1}
\newcommand{\lp}{\Pi^1_1}
\newcommand{\id}{id}
\newcommand{\concatt}{%
	\mathbin{\raisebox{1ex}{\scalebox{.7}{$\frown$}}}%
}
\newcommand{\ovl}{\overline}
\newcommand{\pred}{pred}
\begin{document}
	
	\title[A complexity problem for
	Borel graphs]{A complexity problem for
		Borel graphs}

	\author[Stevo Todor\v{c}evi\'c]{Stevo Todor\v{c}evi\'c}
	
	\author[Zolt\'an Vidny\'anszky]{Zolt\'an Vidny\'anszky}
	\thanks{The first author's research was partially supported by grants of NSERC (455916) and CNRS (IMJ-PRG UMR7586). The second author was partially supported by the
		National Research, Development and Innovation Office
		-- NKFIH, grants no.~113047, no. ~129211 and FWF Grants P29999 and M2779. }

	\insert\footins{\footnotesize{MSC codes: Primary 03E15; Secondary 28A05, 05C15}}
	\insert\footins{\footnotesize{Key Words: $\Sigma^1_2$-complete, chromatic number, Borel chromatic number, Borel graph, Hedetniemi's conjecture, antibasis}}

	\begin{abstract} We show that there is no simple (e.g. finite or countable) basis for Borel graphs with infinite Borel chromatic number. In fact, it is proved that the closed subgraphs of the shift graph on $\omm$ with finite (or, equivalently, $\leq 3$) Borel chromatic number form a $\mathbf{\Sigma}^1_2$-complete set. This answers a question of Kechris and Marks and strengthens several earlier results.
		
	\end{abstract}

	\maketitle
	\section{Introduction}

	A \emph{Borel graph} $\mc{G}$ is a pair $(X,E)$, where $X$ is a Polish space and $E \subset X^2\setminus \{(x,x):x \in X\}$ is a symmetric Borel set. The elements of $X$ are called \emph{vertices}, while the pairs in $E$ are called \emph{edges}. 
	
	The study of Borel graphs and generalizations of classical graph theoretic notions to this context is a flourishing field. One of the most natural such notions is the so called Borel chromatic number introduced in \cite{kechris1999borel}. For $n \in \{1,2,\dots,\aleph_0\}$ a Borel graph $\mc{G}=(X,E)$ is said to have a \textit{Borel chromatic number $n$}, in notation $\chi_B(\mc{G})=n$, if $n$ is minimal such that there exists a \textit{Borel $n$-coloring of $\mc{G}$}, that is, there exist a Polish space $Y$ and a Borel map $c:X \to Y$ so that $xEy$ implies $c(x) \not =c(y)$ and the size of the image of $c$ is $n$. If $\chi_B(\mc{G}) \not \leq n$ for every $n \in \{1,2,\dots,\aleph_0\}$ then we say that $\mc{G}$ has uncountable Borel chromatic number.
	
	How can the Borel chromatic number of a graph be decided? An obvious lower bound can be given if it contains a copy of a graph with a known Borel chromatic number. More precisely, we say that $\mc{H}$ is \textit{Borel below} $\mc{G}$, or $\mc{H} \leq_B \mc{G}$, if there exists a Borel map $f$ from the vertex set of $\mc{H}$ to the vertex set of $\mc{G}$ that takes edges to edges. If moreover, $f$ is a bijection and takes non-edges to non-edges as well, then $\mc{H}$ is said to be \textit{Borel isomorphic to} $\mc{G}$. It is clear that $\mc{H} \leq_B \mc{G}$ implies $\chi_B(\mc{H}) \leq \chi_B(\mc{G})$. 
	
	Kechris, Solecki and Todor\v{c}evi\'c \cite{kechris1999borel} characterized the graphs that have an uncountable Borel chromatic number proving the so called $\mathbb{G}_0$ dichotomy, that is, showing that there exists a Borel graph, called $\mathbb{G}_0$, so that a Borel graph $\mc{G}$ has uncountable Borel chromatic number if and only if $\mathbb{G}_0 \leq_B \mc{G}$. This dichotomy has plenty of applications, for instance, it implies a large collection of dichotomies in descriptive set theory, see, e.g., \cite{miller2012graph}.
	
	Thus, it is very natural to ask, whether there exists an analogue of this dichotomy for graphs with infinite Borel chromatic number. The simplest nontrivial examples of graphs with countably infinite Borel chromatic number are the graphs defined by functions: let $f:X \to X$ be a Borel map, define $\mc{G}_f=(X,E_f)$ by $xE_fy \iff x \not = y$ and $(f(x)=y$ or $f(y)=x)$. It is not hard to see that for any $f$ we have $\chi_B(\mc{G}_f) \leq \aleph_0$, see \cite[Proposition 4.5]{kechris1999borel}. 
	
	One of the most interesting instances of graphs of this sort is the \emph{shift graph}, $\mc{G}_\mc{S}$, on $\omm$ (the collection of infinite subsets of the natural numbers with the topology inherited from $\oom$). Define the \emph{shift map} by  $\mc{S}(x)=x\setminus \{\min x\}$ and let $\mc{G}_\mc{S}=(\omm, E_\mc{S})$. As mentioned above $\chi_B(\mc{G}_\mc{S})  \leq \aleph_0$.  Typically, giving a lower estimate on a Borel graph's chromatic number goes through an argument that uses the Baire category theorem (e. g., the graph $\mathbb{G}_0$), measure and ergodic theory (see \cite{conleymeas}) or the Borel determinacy theorem (see \cite{marksdet}). In our case, the lower estimate uses an infinite dimensional analogue of the Ramsey theorem, namely the Galvin-Prikry theorem. It states that for any finite cover of $\omm$ by Borel sets $B_0,\dots,B_n$ there exists an $i \leq n$ and an $x \in \omm$ so that $[x]^\om \subset B_i$, in other words,  all infinite subsets of $x$ are contained in $B_i$. This of course implies $\chi_B(\mc{G}_\mc{S})=\aleph_0$. The Galvin-Prikry theorem (in a sense that can be made precise, see \cite{simpson}) is somewhat weaker than the Borel determinacy theorem and thus the proof of $\chi_B(\mathcal{G}_\mc{S})=\aleph_0$ potentially can be considered as an example of a fourth kind. 
	
	Since $\mc{G}_\mc{S}$ is in some sense rather small (e.g. it is locally finite) but still has  infinite Borel chromatic number and certain universality properties, one might wonder whether a graph $\mc{G}$ has infinite Borel chromatic number if and only if  $\mc{G}_\mc{S} \leq_B \mc{G}$. Unfortunately, it is not hard to see that that the answer to this question is negative: the direct sum for $n \in \mathbb{N}$ of the complete finite graphs on $n$ vertices is a counterexample.  Another, much more general example to the failure of this type of basis results has been given by Conley and Miller \cite{conley2014antibasis}.
	
	After this, there are several natural ways to proceed. 
	
	Firstly, we could restrict ourselves to a smaller class of graphs, and hope for a basis result in that class. For instance, Kechris, Solecki and Todor\v{c}evi\'c asked whether being Borel above $\mc{G}_\mc{S}$ characterizes the graphs with infinite Borel chromatic number of the form $\mc{G}_f$?  Or, it is also natural to consider the structure of the \emph{Borel/closed subgraphs} of the shift graph: for a Borel graph $\mc{G}=(X,E)$ and $B \subset X$ let us denote by $\mc{G}|_B$ the graph $(X,E \cap B^2)$. 
	
	\begin{question}
		\label{q:basic}
		Let $C \subset \omm$ be a closed set. 
		\begin{enumerate}
			\item \label{q:cube} Is it true that  $\chi_B(\mc{G}_\mc{S}|_C)=\aleph_0$ if and only if there exists an $x \in \omm$ with $[x]^\om \subset C$?
			
			\item \label{q:yann} Is it true that  $\chi_B(\mc{G}_\mc{S}|_C)=\aleph_0$ if and only if $\mc{G}_\mc{S} \leq_B \mc{G}_\mc{S}|_C$?
			
		\end{enumerate}
	\end{question}
	
	A negative answer to question \eqref{q:cube} has been given by Di Prisco and Todor\v{c}evi\'c \cite{di2015basis} (see also \cite[4(E)]{kechris2015descriptive} for a simple counterexample). Moreover, it has been shown recently by Pequignot \cite{pequignot2017finite} that \eqref{q:yann} is false as well. 
	
	Secondly, one could hope for a different graph, or a countable basis instead of a one element basis:
	
	\begin{question}
		(Kechris, Marks \cite[Problem 4.23]{kechris2015descriptive}) 
		\label{q:kech} Is there a sequence $(\mc{G}_n)_{n \in \N}$	of Borel graphs with
		$\chi_B(\mc{G}_n)<\aleph_0$ and $\chi_B(\mc{G}_n)$ unbounded such that for every Borel graph $\mathcal{H}$ with infinite Borel
		chromatic number and for every $n$ we have that
		$\mc{G}_n \leq_B \mc{H}$?
	\end{question}

	It follows from our results that the answer to all of these questions is negative. Roughly speaking, positive basis results typically imply that the complexity of the collection of the Borel graphs with infinite Borel chromatic number (with an appropriate coding) is low and we will show that this is not the case, even for the closed subgraphs of $\mc{G}_\mc{S}$. Note that for such graphs having infinite Borel chromatic number is equivalent to having Borel chromatic number $\geq 4$, see \cite[Theorem 5.1]{kechris1999borel}. In \cite{benen}, Carroy, Miller, Schrittesser and the second author characterized the graphs with Borel chromatic number $\geq 3$ similarly to the $\mathbb{G}_0$ dichotomy: it is shown that there exists a Borel graph, called $\mathbb{G}_{odd}$, having the property that for every Borel graph $\mc{G}$ we have $\chi_B(\mc{G}) \geq 3$ if and only if $\mathbb{G}_{odd} \leq_B \mc{G}$. Hence we obtain a complete description of the characterizability of the Borel chromatic numbers of graphs in terms of simple bases. These results reinforce the experience from the classical case of finite graphs, namely, that it is strictly more complicated to decide whether a graph has chromatic number $\geq n$ than to check whether a given coloring is correct for every $n$, except if $n \leq 3$.  
	
	Now we formulate the precise statement of our results. A family $\mc{A}$ of Borel graphs is called \textit{$\mathbf{\Sigma}^1_1$-parametrizable} if there exist Polish spaces $X, Y$ and a $\mathbf{\Sigma}^1_1$ set $E \subset X \times Y^2$ so that for any $\mc{G} \in \mc{A}$ there exists an $x \in X$ with $\mc{G}$ being Borel isomorphic to $(Y,\{(y,z):(x,y,z)\in E\})$ and the set $\{x:(Y,\{(y,z):(x,y,z)\in E\}) \text{ is  Borel isomorphic to some graph in }\mc{A}\}$ is $\mathbf{\Sigma}^1_1$. 
	Recall that a subset $A$ of a Polish space $X$ is \textit{$\mb{\Sigma}^1_2$-hard}, if for any Polish space $X'$ and $A'$ $\mb{\Sigma}^1_2$ subset of $X'$ there exists a Borel map, called a \textit{reduction,} $f:X' \to X$ with $x' \in A' \iff f(x') \in A$ for every $x'\in X'$. A $\mb{\Sigma}^1_2$-hard set that is $\mb{\Sigma}^1_2$ is called \textit{$\mb{\Sigma}^1_2$-complete}.
	Now we are ready to state our main result.
	
	\begin{theorem}
		\label{t:mainintro}
		The collection of closed sets $C \subset \omm$ so that $\chi_B(\mc{G}_\mc{S}|_C)<\aleph_0$ (or, equivalently, $\chi_B(\mc{G}_\mc{S}|_C)\leq 3$) is $\mathbf{\Sigma}^1_2$-complete. Consequently, there exists no sequence of $\mathbf{\Sigma}^1_1$-parametrizable collections of graphs $(\mc{A}_n)_{n \in \mathbb{N}}$ so that for every $C \subset \omm$ closed set $\chi_B(\mc{G}_\mc{S}|_C)\geq \aleph_0$ if and only if there exist an increasing sequence $(n_i)_{i\in \mathbb{N}}$ and $A_{n_i} \in \mc{A}_{n_i}$ with $A_{n_i} \leq_B \mc{G}_\mc{S}|_C$. In particular, there is no one element basis, or countable basis in the sense of Question \ref{q:kech}.

	\end{theorem}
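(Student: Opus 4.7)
The proof splits naturally into two parts: verifying that the collection of closed $C$ with $\chi_B(\mc{G}_\mc{S}|_C)<\infty$ lies in $\mb{\Sigma}^1_2$, and proving its $\mb{\Sigma}^1_2$-hardness. Membership in $\mb{\Sigma}^1_2$ is routine: the assertion $\chi_B(\mc{G}_\mc{S}|_C)\leq 3$ unfolds to ``there exist three Borel sets $A_1,A_2,A_3\subseteq \omm$ covering $C$ and each $E_\mc{S}$-independent.'' Borel sets are coded by reals, and the cover/independence conditions are $\mb{\Pi}^1_1$ in those codes and in the code of $C$, so the whole statement is an existential real quantifier over a $\mb{\Pi}^1_1$ matrix. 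The equivalence of ``$<\infty$'' with ``$\leq 3$'' will either be imported from the forthcoming $\mathbb{G}_{odd}$ dichotomy cited in the introduction, or established directly for closed subgraphs of $\mc{G}_\mc{S}$ by exploiting the functional, acyclic nature of this graph.

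For the $\mb{\Sigma}^1_2$-hardness I would reduce from a $\mb{\Sigma}^1_2$-complete set written in the standard form
\[
S=\{x\in \oom : \exists y\in \oom,\ T_{x,y}\text{ is well-founded}\},
\]
where $(x,y)\mapsto T_{x,y}\subseteq \mathbb{N}^{<\mathbb{N}}$ is a Borel assignment of trees. The goal is a Borel map $x\mapsto C_x$ into the hyperspace of closed subsets of $\omm$ with $x\in S \iff \chi_B(\mc{G}_\mc{S}|_{C_x})<\infty$. The set $C_x$ will be constructed by stitching together ``tree-indexed'' closed pieces so that finite Borel partitions of $C_x$ into $E_\mc{S}$-independent sets correspond, in a Borel fashion, to pairs $(y,\rho)$ consisting of a witness $y$ and a well-foundedness rank $\rho$ on $T_{x,y}$. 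The easier direction is that any such $(y,\rho)$ produces a Borel $3$-coloring of $\mc{G}_\mc{S}|_{C_x}$ by a straightforward induction on $\rho$.

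The main obstacle is the reverse direction: if no witness $y$ exists, then $\chi_B(\mc{G}_\mc{S}|_{C_x})=\aleph_0$. The most naive route, planting a cube $[z]^\mathbb{N}$ inside $C_x$ and applying the Galvin--Prikry theorem, is blocked by the Di Prisco--Todor\v{c}evi\'c counterexample to Question~\ref{q:basic}(\ref{q:cube}). The plan is instead to set up a parameterized Ramsey/fusion argument: starting from any putative finite Borel coloring of $\mc{G}_\mc{S}|_{C_x}$, descend through the tree structure underlying $C_x$, at each stage applying Galvin--Prikry to an appropriately relativized Borel subset to prune the persisting colors, and ultimately extract a real $y$ together with a well-foundedness rank on $T_{x,y}$, contradicting the hypothesis. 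The delicate combinatorial choreography of the tree-indexed pieces of $C_x$ needed to make this fusion succeed is the technical crux.

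The final ``consequently'' clause follows by a clean descriptive-set-theoretic computation. If $(\mc{A}_n)_{n\in\mathbb{N}}$ were a sequence of $\mb{\Sigma}^1_1$-parametrizable families with the stated basis property, then ``$\chi_B(\mc{G}_\mc{S}|_C)\geq \aleph_0$'' would be equivalent to ``$\exists n,\ \exists A\in \mc{A}_n,\ A\leq_B \mc{G}_\mc{S}|_C$,'' a $\mb{\Sigma}^1_2$ statement (a parametrization quantifier plus a Borel-homomorphism quantifier over a $\mb{\Pi}^1_1$ matrix). Its negation, our target set, would then lie in $\mb{\Pi}^1_2$, contradicting the $\mb{\Sigma}^1_2$-completeness established in the first part, since $\mb{\Sigma}^1_2\neq\mb{\Pi}^1_2$ in the boldface projective hierarchy. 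In particular, the countable basis envisioned in Question~\ref{q:kech} cannot exist.
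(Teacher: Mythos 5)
Your membership computation and the final ``consequently'' clause are both essentially right and match the paper (the paper writes out the negation of the target set as a $\mathbf{\Pi}^1_2$ expression, which is the same calculation). But the core hardness argument you propose is not the paper's approach, and as described it contains a genuine gap that you yourself flag without resolving.

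The paper never constructs a closed set whose infinite Borel chromatic number has to be certified by a Ramsey-type argument. That is precisely what the Di Prisco--Todor\v{c}evi\'c and Pequignot counterexamples warn against: high Borel chromatic number of a subgraph of the shift is not witnessed by anything Galvin--Prikry can see, and there is no known ``parameterized fusion'' that recovers a well-foundedness rank from an arbitrary finite Borel coloring. Instead, the paper's construction makes every \emph{section} of the parameterized graph have Borel chromatic number $\leq 3$ by design (via the observation, Lemma~\ref{l:nondom}, that restrictions of the shift to non-dominating Borel sets are $3$-colorable), and then proves that a finite Borel coloring of the \emph{disjoint union} of the sections is equivalent to a \emph{full Borel uniformization} of a $\mathbf{\Pi}^1_1$ set of coloring codes. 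The $\mathbf{\Sigma}^1_2$-hardness then comes from an effective, constructibility-based argument about when such uniformizations exist (Theorem~\ref{t:main2}, proved via Gandy's basis theorem, Kleene's uniformization criterion, and a diagonalization over $\Delta^1_1$ reals). This is the kind of phenomenon driving the Adams--Kechris result cited as Corollary~\ref{c:kechadams}, not a combinatorial Ramsey phenomenon. Your proposal has nothing playing the role of this effective machinery, and the problem it replaces it with --- showing directly that $\chi_B(\mc{G}_\mc{S}|_{C_x})=\aleph_0$ when no witness $y$ exists, by fusion --- is exactly the step you describe as ``the technical crux'' without supplying it, and it is the step that the entire architecture of the paper is designed to circumvent. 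Your plan also omits the reduction from Borel to closed subgraphs (Lemma~\ref{l:lift}), which is nontrivial and needed whichever hardness argument one runs.

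One smaller point: ``$\chi_B(\mc{G}_\mc{S}|_C)<\infty$ iff $\leq 3$'' does not need the forthcoming $\mathbb{G}_{odd}$ dichotomy; the paper gets it from the Kechris--Solecki--Todor\v{c}evi\'c analysis of graphs generated by a single Borel function, which you correctly identify as the alternative route.
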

	
	Let us point out that Pequignot also used complexity to answer \eqref{q:yann} of Question \ref{q:basic}. His argument is built on a result of Marcone \cite{marcone}, who proved that the set of quasi-orders that are better quasi-orders (bqos) is $\mathbf{\Pi}^1_2$-complete, that is, its complement is $\mathbf{\Sigma}^1_2$-complete (bqos were defined by Nash-Williams \cite{nashwilliams}, they form a particularly well behaving class of quasi-orders, see also \cite[Chapter 9]{mansfield} and \cite{Peq}.) Pequignot's proof proceeds by showing that there is a reduction from bqos to the family of closed sets $C \subset \omm$ for which $\mc{G}_\mc{S} \not \leq_B \mc{G}_\mc{S}|_C$ holds. This implies that the collection $\{C\subset \omm:C \text{ is closed, } \mc{G}_\mc{S} \leq_B \mc{G}_\mc{S}|_C\}$ is $\mathbf{\Sigma}^1_2$-complete. As the set $\{C\subset \omm:C \text{ is closed, } \chi_B(\mc{G}_\mc{S}|_C)=\aleph_0\}$ is $\mathbf{\Pi}^1_2$  these sets must be distinct, i. e., the answer to \eqref{q:yann} of Question \ref{q:basic} is negative.

	On the positive side, we show that closed subgraphs of the shift with infinite Borel chromatic number form a basis for Borel subgraphs with infinite Borel chromatic number. In a sense, this answers \cite[Problem 4.22]{kechris2015descriptive}.
	
	\begin{theorem}
		\label{t:mainintro2} Let $B \subset \omm$ be a Borel set.  There exists a closed set $C \subset \omm$ and a continuous, shift-invariant injection $\Psi:C \to B$, so that $\Psi^{-1}$ is also shift-invariant and $\chi_B(\mc{G}_\mc{S}|_{C})=\chi_B(\mc{G}_\mc{S}|_{B})$. If $B$ is closed under the shift map, then $\Psi$ can be taken to be a bijection.
	\end{theorem}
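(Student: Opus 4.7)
I would split the argument into a reduction step and a main coding construction. First, for the reduction, consider $B_\infty := \{b \in B : \mc{S}^n b \in B \text{ for all } n \geq 0\}$, a Borel shift-closed subset of $B$. One checks that no edges of $\mc{G}_\mc{S}$ connect $B_\infty$ and $B \setminus B_\infty$ (an edge forces both endpoints into $B_\infty$ if one is), and that on $B \setminus B_\infty$ the function $n(x) := \min\{n : \mc{S}^n x \notin B\}$ drops by one along $\mc{S}$, so parity of $n$ is a Borel $2$-coloring. Thus $\chi_B(\mc{G}_\mc{S}|_B) = \chi_B(\mc{G}_\mc{S}|_{B_\infty})$ whenever the latter is at least $2$ (the low-chromatic cases are handled by direct inspection), and a continuous shift-equivariant bijection $\Psi_0 : C \to B_\infty$ with $C$ closed shift-closed will automatically provide the required continuous shift-equivariant injection $C \to B$.

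For the shift-closed case, apply a standard change-of-topology: find a zero-dimensional Polish topology $\tau'$ on $\omm$, finer than the ambient $\tau$, in which $B$ is clopen and $\mc{S}$ remains continuous, so that $(B, \tau'|_B)$ is Polish. The plan is to realize $(B, \tau'|_B, \mc{S}|_B)$ as a closed shift-invariant subset of $(\omm, \tau, \mc{S})$ via a continuous shift-equivariant embedding, exploiting that $\min : \omm \to \N$ is a $\tau$-continuous (hence $\tau'$-continuous) height function strictly increasing along $\mc{S}$-orbits. Concretely, choose a $\tau'$-clopen basis $(U_k)_{k \in \N}$ of $\omm$ and let $(V_m)_{m \in \N}$ enumerate the family $\{\mc{S}^{-n}(U_k) : n, k \geq 0\}$, which is still a $\tau'$-clopen basis but is now closed under $\mc{S}^{-1}$. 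Define
\[
  q(x) := \sum_{m < \min x} 2^m \mb{1}_{V_m}(x) \in [0, 2^{\min x}), \qquad \phi(b) := \{\, 2^{b_n} + q(\mc{S}^n b) : n \geq 0\,\},
\]
where $b_n := \min(\mc{S}^n b)$. Then $\phi : B \to \omm$ is a shift-equivariant Borel injection, the formula $\Psi(c) := \{\, \lfloor \log_2 c_n \rfloor : n \geq 0\,\}$ is $\tau$-continuous on $\omm$ and left-inverse to $\phi$, and taking $C := \phi(B)$ gives a continuous shift-equivariant bijection $\Psi : C \to B$.

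The main obstacle is to show that $C$ is $\tau$-closed in $\omm$. If $c^{(K)} = \phi(b^{(K)}) \to c$ in $\tau$ with $c^{(K)} \in C$, one extracts $b^{(K)} \to b := \Psi(c)$ in $\tau$ and residues $r_n := c_n - 2^{b_n}$ matching $q(\mc{S}^n b^{(K)})$ for large $K$. The closure of the basis under $\mc{S}^{-1}$ means that the bit $\mb{1}_{V_m}(b^{(K)})$ may be read off as a bit of some $q(\mc{S}^{n_0} b^{(K)})$, since the bits at position $n_0$ encode $\mb{1}_{V_{m'}}(\mc{S}^{n_0} b^{(K)}) = \mb{1}_{\mc{S}^{-n_0}(V_{m'})}(b^{(K)})$, and the right index is available as long as $b_{n_0}$ is large enough to fit it. Because $b_n \to \infty$ along every orbit, all address bits of $b$ are eventually captured, so the stabilization of the residues $r_n$ forces $b^{(K)} \to b$ in $\tau'$; this gives $b \in B$ (by $\tau'$-closedness of $B$) and, using $\tau'$-continuity of $q$, the consistency $q(\mc{S}^n b) = r_n$. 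Hence $c = \phi(b) \in C$. The chromatic-number equality follows because $\Psi$ is a continuous shift-equivariant bijection and hence makes $\mc{G}_\mc{S}|_C$ and $\mc{G}_\mc{S}|_B$ isomorphic as Borel graphs. The trickiest bookkeeping is the basis enumeration ensuring that address-bit stabilization does propagate uniformly, including for elements whose initial coordinates leave little encoding room (for example the thin orbits $\{k + \N : k \geq 0\}$, which sit inside a closed discretely-placed subset of $\omm$ and are amenable to separate treatment).
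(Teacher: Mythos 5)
Your reduction to the shift‑invariant core $B_\infty$ (which is the paper's $B'$) and the treatment of the low‑chromatic cases are correct and agree with the paper. The coding construction, however, has a gap that cannot be repaired by a clever enumeration of the basis: the number $q(\mc{S}^n b)$ records $\tau'$‑information only about the tail $\mc{S}^n b$, so from the bits at position $n$ you learn about $\mc{S}^{n'}b$ only for $n' \geq n$. Information about $b$ itself is therefore encoded solely at position $0$, which has room for just $b_0 = \min b$ bits, and this can be $0$. In particular, a $\tau'$‑clopen set that is not of the form $\mc{S}^{-j}(W)$ for some $j\geq 1$ can never be read off at a later position. Concretely, take $B = \omm\setminus\{\mathbb{N}\}$, where $\mathbb{N}$ denotes the element $\{0,1,2,\dots\}$. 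Then $B$ is forward‑shift‑closed (so $B_\infty = B$), since $\mc{S}^n b = \mathbb{N}$ forces $b=\mathbb{N}$ (because $b_n\geq n$ with equality for all $n$), and $B$ is open but not closed. Let $\tau'$ refine $\tau$ by making $\{\mathbb{N}\}$ clopen; $\mc{S}$ stays $\tau'$‑continuous because $\mc{S}^{-1}(\{\mathbb{N}\}) = \emptyset$. For $b^{(K)} := \{0,1,\dots,K,K+2,K+3,\dots\} \in B$ one has $b^{(K)}\to\mathbb{N}$ in $\tau$, $b^{(K)}_0 = 0$, and $\mc{S}^n b^{(K)} \neq \mathbb{N}$ for all $n,K$; a direct check gives $q(\mc{S}^n b^{(K)}) \to q(\mc{S}^n \mathbb{N})$ for every $n$, because every bit $\mathbf{1}_{V_m}$ inspected at positions $n\geq 1$ agrees with its value on the $\tau$‑limit (for $V_m$ involving $\{\mathbb{N}\}$ both sides are $0$ once $n\geq 1$), while position $0$ carries no bits at all. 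Hence $\phi(b^{(K)}) \to \phi(\mathbb{N})$ in $\tau$, yet $\mathbb{N} \notin B$, so $C = \phi(B)$ is not closed. The failure is structural: the singleton $\{\mathbb{N}\}$ is not $\mc{S}^{-j}(W)$ for any $j\geq 1$ and any $W$, so no enumeration of $\{\mc{S}^{-j}(U_k)\}$ can make its indicator readable anywhere except at position $0$.

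The paper's Lemma \ref{l:lift} avoids exactly this problem by encoding backward rather than forward information: at position $n$ it records, for each finite increasing $\sigma_i$ with $\sigma_i\concatt\mc{S}^n x \in B'$, a length‑$x(n)$ initial segment of a Borel ``address'' value $\ovl{\psi}\bigl(s,\sigma_i\concatt\mc{S}^n x, \pred(s,\sigma_i\concatt\mc{S}^n x)\bigr)$, chosen so that the graph of $\ovl{\psi}$ is $\tau$‑closed. Since $x$ itself occurs as $\sigma_{i^*}\concatt\mc{S}^n x$ for the right $i^*\in\pred(s,\mc{S}^n x)$, position $n$ contains a length‑$x(n)$ prefix of the address of $x$; as $x(n)\to\infty$ these prefixes determine the whole address, and the $\tau$‑closedness of $\graph(\ovl{\psi})$ forces the $\tau$‑limit of a sequence in $\ovl{\Psi}(B')$ to lie in $\ovl{\Psi}(B')$ again. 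This backward encoding along the orbit is the idea your scheme is missing; without an analogue of it, one cannot expect $\phi(B)$ to be $\tau$‑closed.
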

	
	In order to prove the complexity result we isolate a general theorem about the complexity of certain families of sets. Suppose that we are given a family $\mc{F}^\Phi$ of subsets of a Polish space $X$ coming from a map $\Phi$ that assigns to each set in $\mc{F}^\Phi$ the set of the witnesses of being in $\mc{F}^\Phi$ (e.g. the codes of the possible finite colorings). Suppose moreover that we put sets from $\mc{F}^\Phi$ ``next to each other" i. e., consider a Borel set $B \subset \N^\N \times X$  and we are interested whether the sections of $B$ are in $\mc{F}^\Phi$ uniformly, that is, whether we can find witnesses of being in $\mc{F}^\Phi$ in a Borel way (we will see later that in the case of graph colorings this is precisely equivalent to the existence of a finite coloring of the graph obtained by putting the graphs $B_s$ ``next to each other"). How hard is it to decide the existence of such a uniform selection? Our general theorem says that if the family $\mc{F}^\Phi$ is complicated enough then it is $\mathbf{\Sigma}^1_2$-hard. (In our case, this will follow from the observations that non-dominating sets are complicated and that $\chi_B(G_\mc{S}|_B) \leq 3$ holds if $B$ is a non-dominating Borel set).

	Now we make the above idea precise. Let $X,Y$ be uncountable Polish spaces, $\bbg$ be a class of Borel sets and $\Phi:\bbg(X) \to \bp(Y)$ be a map.  Define $\mc{F}^{\Phi}\subset \bbg(X)$ by $A \in \mc{F}^{\Phi} \iff \Phi(A) \not = \emptyset$ 
	and let the \emph{uniform family, $\mc{U}^{\Phi}$,} be defined as follows: for $B \in \bbg(\om^\om \times X)$ let
	\[\bar{\Phi}(B)=\{(s,y) \in  \om^\om \times Y:y \in \Phi(B_s)\},\]
	and 
	\[B \in \mc{U}^{\Phi} \iff \bar{\Phi}(B) \text{ has a full Borel uniformization}\]
	
	(that is, it contains the graph of a Borel function $\oom \to Y$).
	
	A family $\mc{F}$ of subsets of a Polish space $X$ is said to be \emph{$\bs$ (resp. $\mathbf{\Sigma}^1_2$)-hard on $\bbg$}, if there exists a set $B \in \bbg(\om^\om \times X)$ so that the set $\{s \in \om^\om:B_s \in \mc{F}\}$ is $\bs$ (resp. $\mathbf{\Sigma}^1_2$)-hard. One would be tempted to think that the fact that $\mc{F}^\Phi$ is $\bs$-hard on $\bbg$ is sufficient for proving the $\mathbf{\Sigma}^1_2$-hard on $\mathbf{\Gamma}$-ness of the family $\mc{U}^{\Phi}$. Unfortunately, this is not the case (at least under the axiom of constructibility), see Remark \ref{r:niceness}.  On the positive side, the typical way of proving that $\mc{F}^\Phi$ is  $\bs$-hard is to start with a given  $A \in \bs(\om^\om)$ and find a closed set $D \subset \om^\om \times \oom$ with $A=\proj_0(D)$. Now, one constructs a set $B \subset \om^\om \times X$ so that $B_s \in \mc{F}^\Phi$ iff $s \in A$ and this is witnessed by every element of the set $D_s$, i. e., $D_s \subset \Phi(B_s)$. The following definition encompasses this situation.

	\begin{definition}
		\label{d:nicely}
		The  family $\mc{F}^{\Phi}$ is said to be \emph{nicely $\bs$-hard on $\bbg$} if for every $A \in \bs(\om^\om)$  there exist sets $B \in \bbg(\om^\om \times X)$ and $D \in \bs(\om^\om \times Y)$ so that $D \subset \bar{\Phi}(B)$ and for all $s \in \om^\om$ we have 
		\[  s \in A \iff D_s \not = \emptyset \iff \Phi(B_s)\not= \emptyset \ ( \ \iff B_s \in \mc{F}^\Phi).\]
		
	\end{definition}	
	We are ready to state our theorem. 
	
	\begin{theorem}
		
		\label{t:main2}
		Let $X,Y$ be uncountable Polish spaces, $\bbg$ be a class of subsets of Polish spaces which is closed under continuous preimages, finite unions and intersections and $\mathbf{\Pi}^0_1 \cup \mathbf{\Sigma}^0_1 \subset \bbg$.   Suppose that $\Phi:\bbg(X) \to \mathbf{\Pi}^1_1(Y)$ is $\bp$ on $\bbg$ (see Definition \ref{d:ond}) and that $\mc{F}^\Phi$ is nicely $\bs$-hard on $\bbg$. 
		Then the family $\mc{U}^\Phi$ is $\mathbf{\Sigma}^1_2$-hard on $\bbg$.
	\end{theorem}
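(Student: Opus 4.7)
\medskip

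\textbf{Proof plan.}
My target is a Borel reduction from an arbitrary $\mathbf{\Sigma}^1_2$ set $S$ to the parameterized family associated to $\mc{U}^\Phi$. The natural $\mathbf{\Sigma}^1_2$-complete set I would reduce from is
\[
S^* \;=\; \bigl\{r \in \om^\om \,:\, A^r \text{ admits a full Borel uniformization}\bigr\},
\]
where $A \subset \om^\om \times \om^\om \times Y$ is a universal $\bs$ set parameterized by $r$ and $A^r = \{(s,y) : (r,s,y) \in A\}$, so that each analytic subset of $\om^\om \times Y$ appears as some $A^r$. The classical correspondence between $\mathbf{\Sigma}^1_2$-definability and Borel-uniformizability of analytic sets (a staple of $\mathbf{\Sigma}^1_2$-completeness arguments, cf.\ Marcone-style techniques) ensures that $S^*$ is $\mathbf{\Sigma}^1_2$-complete.

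The goal is then to produce $\tilde B \in \bbg(\om^\om \times \om^\om \times X)$ such that for every $r \in \om^\om$,
\[
r \in S^* \iff \tilde B_r \in \mc{U}^\Phi .
\]
I would apply the \emph{nicely $\bs$-hard} hypothesis to the set $A$ itself (viewed as a $\bs$ subset of a single $\om^\om$ after a Borel identification of $\om^\om \times \om^\om \times Y$ with $\om^\om$). This yields $B^* \in \bbg$ and an analytic $D^* \subset \Phi(B^*)$ with $\Phi(B^*_{(r,s,y)}) \neq \emptyset \iff (r,s,y) \in A$. Using the closure of $\bbg$ under continuous preimages and finite boolean operations, I would reorganize $B^*$ to a set $\tilde B \in \bbg(\om^\om \times \om^\om \times X)$ whose $r$-section is (after pairing the residual $\om^\om \times Y$ into a single $\om^\om$) a member of $\bbg(\om^\om \times X)$, and whose $\Phi$-image encodes $A^r$ in the sense that $D^*$ provides an analytic copy of $A^r$ inside $\Phi(\tilde B_r)$.

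The forward direction $r \in S^* \Rightarrow \tilde B_r \in \mc{U}^\Phi$ would then be the easier half: a Borel uniformization of $A^r$ lifts, via the containment $D^* \subset \Phi(\tilde B_r)$ and the Borel structure of $D^*$, to a Borel uniformization of $\Phi(\tilde B_r)$. The reverse direction requires that any Borel uniformization of $\Phi(\tilde B_r)$ can be converted back into one of $A^r$; here one uses that the nicely-$\bs$-hard construction couples the sectionwise non-emptiness of $\Phi$ to the $\bs$ set $A$, so that a Borel-in-$s$ selection from $\Phi(\tilde B_r)_s$ forces a Borel-in-$s$ selection from $A^r_s$ (modulo passing through the analytic plant $D^*$).

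\textbf{Main obstacle.} The hypothesis gives only the inclusion $D^* \subset \Phi(B^*)$, not equality, and the ``junk'' $\Phi(B^*) \setminus D^*$ could in principle admit spurious Borel uniformizations that do not descend to Borel uniformizations of $A^r$. The delicate point of the proof is to arrange the construction in Step 2 (using the flexibility afforded by the closure properties of $\bbg$, the hypothesis that $\Phi$ is $\bp$ on $\bbg$, and a judicious padding of the ambient space $X$) so that the reverse implication survives: either by forcing $\Phi(\tilde B_r) = D^*$ up to a uniformization-preserving equivalence, or by a category/coding argument that extracts from an arbitrary Borel selector in $\Phi(\tilde B_r)$ a Borel selector in $A^r$. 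I expect this is where the hypothesis that $\mc{F}^\Phi$ is nicely $\bs$-hard (rather than merely $\bs$-hard) is essential, explaining Remark \ref{r:niceness}.
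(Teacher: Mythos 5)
Your proposal takes a genuinely different starting point from the paper, but it has a structural flaw that breaks the argument before you even reach the obstacle you flag.

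\textbf{What the paper actually does.} The paper does not reduce from a pre-existing ``Borel uniformization is $\mathbf{\Sigma}^1_2$-complete'' fact (that fact, Corollary~\ref{c:kechadams}, is deduced \emph{from} Theorem~\ref{t:main2}, not used to prove it). Instead it starts from an arbitrary $\Sigma^1_2$ set $A=\proj_0(S')$, $S'\in\Pi^1_1$, builds the auxiliary $\Pi^1_1$ set $S=\{(q,r,s):\exists u\in\Delta^1_1(q,r,s)\,(r,u)\in S'\}$, and then runs a diagonal argument (Lemma~\ref{l:main0}) against a universal analytic set. That lemma arranges that $\Phi(B_{(s,t)})$ is \emph{always nonempty}, but contains a $\Delta^1_1(q,s,t)$ point for all $t$ precisely when $s\in S$; when $s\notin S$, some $t$ kills every sufficiently definable witness, while $D_s$ is a nonempty analytic subset of $\Phi(B_{(s,t)})$. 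The rest of the proof is effective: Kleene's characterization of full Borel uniformizability as pointwise $\Delta^1_1(\cdot,q)$-solvability, a Gandy-type basis lemma (Fact~\ref{f:effective}\eqref{f:basis}), and an admissibility argument bounding $\sup\{\omega^{q_0,r,s}_1: s\notin S_{(q_0,r)}\}$. This effective machinery is not an optimization --- it is precisely what kills the ``spurious'' uniformizations you worry about in your final paragraph.

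\textbf{Where the proposal breaks.} Your reorganization step cannot work as stated. The niceness hypothesis, applied to $A\subset\om^\om\times\om^\om\times Y$, gives $B^*$ with $\Phi(B^*_{(r,s,y)})\neq\emptyset\iff(r,s,y)\in A$. If you then pair $u=(s,y)$ and set $\tilde B_{(r,u)}=B^*_{(r,s,y)}$, then $\Phi(\tilde B_{(r,u)})=\emptyset$ for every $u$ with $(r,s,y)\notin A$, which is most of them (unless $A^r$ is all of $\om^\om\times Y$). Since membership in $\mc{U}^\Phi$ requires $\Phi(\tilde B_{(r,u)})\neq\emptyset$ for \emph{every} $u$, we get $\tilde B_r\notin\mc{U}^\Phi$ for essentially all $r$, so the forward direction already fails. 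There is no obvious way to ``reorganize'' $B^*$ using only closure under continuous preimages and finite Boolean operations so that $\Phi(\tilde B_{(r,s)})$ tracks the section $A^r_s$ rather than the singleton $\{y\}$ --- $\Phi$ is given and you only control it through the niceness axiom, which speaks about nonemptiness, not about which witnesses appear.

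\textbf{On the obstacle you flag.} Even granting a fix to the above, the ``main obstacle'' you identify is real, and your two suggested remedies (forcing $\Phi(\tilde B_r)=D^*$, or a category/coding argument) do not point at the actual mechanism. One cannot force equality because $\Phi$ is fixed. And the argument that converts an arbitrary Borel selector of $\Phi(\tilde B_r)$ into a contradiction is not a category argument: it is Kleene's theorem (any Borel selector is $\Delta^1_1$ in a real parameter $p_0$), followed by a diagonalization against $p_0$ supplied in advance by the universal-set trick in Lemma~\ref{l:main0}. Without this kind of effective content the reverse implication has no proof. So the proposal is correct in identifying \emph{where} the difficulty lies, but it offers no route around it and the setup preceding it is already broken.
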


	The paper is organized as follows. First we start with summarizing the most important facts and notations used in the proofs. Then in Section \ref{s:general} we prove Theorem \ref{t:main2}. In Section \ref{s:appl1} we apply this result to calculate the complexity of the collection of closed subgraphs of the shift graph and also show Theorem \ref{t:mainintro2}. Finally, in the last section we discuss a counterexample to the $\Delta^1_1$ version of Hedetniemi's conjecture and finish with a couple of open problems.
	
	\textbf{Acknowledgements.} We would like to thank Benjamin Miller for the inspiring discussions, questions and for pointing out a way to prove the analytic-hardness of non-dominating sets using only classical tools (see Lemma \ref{l:forH}). We are also very grateful to Slawomir Solecki, Alexander Kechris,  M\'arton Elekes and Jan Greb\'ik for their help, valuable comments and suggestions.

	\section{Preliminaries and notations}
	\label{s:prel}
	For the collection of finite, (resp. infinite) sequences of elements of a set $A$ the notations $A^{<\om},$ (resp. $A^{\om}$) will be used, while the family of countably infinite subsets of $A$ is denoted by $[A]^{\om}$. If $x \in A^{\om}$ and $n \in \om$ then $x|_n$ will stand for the sequence $(x(i))_{i<n}$.

	Suppose that $C \subset X_0 \times \dots \times X_n$ for some sets $X_0,\dots, X_n$. For an $i \leq n$ and $(x_0,\dots,x_i) \in X_0 \times \dots \times X_i$ as usual $C_{(x_0,\dots,x_i)}$ will stand for $\{(x_{i+1},\dots,x_n):(x_0,\dots,x_n)\in C\}$, the vertical section of $C$ determined by $(x_0,\dots,x_i)$. We also use the analogous notation for mappings defined on product spaces. $\proj_i$ stands for the projection map $\proj_i: X_0 \times \dots \times X_n \to X_i$.
	
	The standard notations $\mb{\Pi}^0_1(X)$, $\bbo(X)$, $\bs(X), \dots$ will be used for the collection of subsets of $X$ that are closed, Borel, analytic, etc. A coding of the Borel sets with nice properties has to be fixed, let $\mb{BC}(X)$ be a set of Borel codes and sets $\mb{A}(X)$ and $\mb{C}(X)$ with the properties summarized below:

	\begin{fact} (see \cite[3.H]{moschovakis2009descriptive})
		\label{f:prel}
		\begin{itemize}
			\item $\mb{BC}(X) \in \bp(\oom)$, $\mb{A}(X) \in \bs(\oom \times X)$, $\mb{C}(X) \in \bp(\oom \times X)$,
			\item for $c\in \mb{BC}(X)$ and $x \in X$ we have $(c,x) \in \mb{A}(X) \iff (c,x) \in \mb{C}(X)$,
			\item if $P$ is a Polish space and $B \in \bbo(P \times X)$ then there exists a Borel map $f:P \to \oom$ so that $\ran(f) \subset \mb{BC}(X)$ and for every $p \in P$ we have $\mb{A}(X)_{f(p)}=B_p$.
		\end{itemize}
	\end{fact}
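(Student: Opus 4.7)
The plan is to follow the standard Moschovakis template: encode each Borel subset of $X$ by a well-founded labeled tree on $\omega$ that records its construction from basic open sets by countable unions and complements, and then extract dual analytic/coanalytic membership relations that coincide precisely on Borel codes.

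First, fix a countable basis $(U_n)_{n \in \omega}$ of $X$ and a recursive bijection $\pi: \omega \to \omega^{<\omega}$. From $c \in \oom$ read off a subtree $T_c \subseteq \omega^{<\omega}$ with $\emptyset \in T_c$ (e.g., $s \in T_c \iff c(\pi^{-1}(s)) \neq 0$) together with a label $\ell_c(s)$ for each $s \in T_c$ of one of the forms ``basic $n$'' ($n \in \omega$), ``complement'', or ``union''. Declare $c \in \mb{BC}(X)$ if (i) $\ell_c$ is consistent, meaning ``basic'' nodes are leaves, ``complement'' nodes have the unique successor $s\concatt 0$, and ``union'' nodes have all successors $s\concatt k$ for $k \in \omega$; and (ii) $T_c$ is well-founded. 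Condition (i) is Borel in $c$ and (ii) is coanalytic (well-foundedness of a tree on $\omega$ being a standard $\bp$ condition), so $\mb{BC}(X) \in \bp(\oom)$.

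Next, for $x \in X$ call $v \in \{0,1\}^{\omega^{<\omega}}$ a \emph{valuation of $c$ at $x$} if for every $s \in T_c$: $v(s) = 1 \iff x \in U_n$ when $\ell_c(s) = $ ``basic $n$''; $v(s) = 1 - v(s\concatt 0)$ when $\ell_c(s) = $ ``complement''; and $v(s) = 1 \iff v(s\concatt k) = 1$ for some $k$ when $\ell_c(s) = $ ``union''. Set $(c,x) \in \mb{A}(X) \iff \exists v\,(v \text{ is a valuation of } c \text{ at } x \text{ with } v(\emptyset) = 1)$ and $(c,x) \in \mb{C}(X) \iff \forall v\,(v \text{ a valuation of } c \text{ at } x \Rightarrow v(\emptyset) = 1)$. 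Both quantifications range over the compact Polish space $\{0,1\}^{\omega^{<\omega}}$ with Borel body, so $\mb{A}(X) \in \bs(\oom \times X)$ and $\mb{C}(X) \in \bp(\oom \times X)$. If $c \in \mb{BC}(X)$, well-foundedness of $T_c$ forces $v \restriction T_c$ to be uniquely determined by induction on the rank of nodes, so the existential and universal versions both agree with this unique value at $\emptyset \in T_c$; the behaviour of $v$ off $T_c$ is irrelevant, and the second bullet follows.

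For the parametrization, proceed by transfinite induction on the Borel rank of $B \in \bbo(P \times X)$. Base case: for $B$ open in $P \times X$, write $B = \bigcup_k V_k \times U_{n_k}$ and let $f(p)$ code the open set $\bigcup_{\{k : p \in V_k\}} U_{n_k}$, which is Borel in $p$ since membership in each $V_k$ is Borel. Inductive step: if $B = (P \times X) \setminus B'$ with $p \mapsto f'(p)$ Borel and sectionwise coding $B'$, let $f(p)$ be the code obtained by placing a ``complement'' node at the root with the tree coded by $f'(p)$ attached underneath; if $B = \bigcup_k B_k$ with Borel $f_k$ coding $B_k$ sectionwise, place a ``union'' node at the root and attach the tree coded by $f_k(p)$ below $k$. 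Each operation depends on $p$ in a Borel way, so $f$ stays Borel. The main obstacle is arranging in the previous paragraph that $\mb{A}$ and $\mb{C}$ coincide on \emph{exactly} $\mb{BC}(X)$: the trick of leaving $v$ unconstrained outside $T_c$ while constraining it on $T_c$ ensures that well-foundedness alone suffices for uniqueness, and with this bookkeeping in place the remaining bullets are routine.
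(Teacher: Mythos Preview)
The paper does not give a proof of this fact; it is stated as a black box with a citation to Moschovakis \cite[3.H]{moschovakis2009descriptive}. Your write-up is a correct and essentially standard expansion of that construction: codes as well-founded labeled trees, membership via existentially/universally quantified valuations, and the parametrization by induction on the Borel hierarchy. One small point of bookkeeping you might want to make explicit in the base case: since your ``union'' nodes are required to have \emph{all} successors, coding $\bigcup_{\{k:p\in V_k\}}U_{n_k}$ means placing a fixed code for $\emptyset$ (e.g., the complement of a union of all basic sets) below those $k$ with $p\notin V_k$; this keeps $f$ Borel and its range inside $\mb{BC}(X)$. Also, the statement only asks that $\mb{A}$ and $\mb{C}$ agree \emph{on} $\mb{BC}(X)$, not that $\mb{BC}(X)$ is exactly the locus of agreement, so your final sentence claims slightly more than is needed (though what you wrote is still fine).
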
 
	
	Similarly, there exists a so called \textit{good universal closed set} for every Polish space as well:

	\begin{fact} (\cite[3.H]{moschovakis2009descriptive})
		\label{f:closed}
		There exists a closed set $U^{\mathbf{\Pi}^0_1} \in \mathbf{\Pi}^0_1(\oom \times X)$ so that if $P$ is a Polish space and $C \in \mathbf{\Pi}^0_1(P \times X)$ then there exists a Borel map $f:P \to \oom$ so that for every $p \in P$ we have $U^{\mathbf{\Pi}^0_1}_{f(p)}=C_p$.
		
	\end{fact}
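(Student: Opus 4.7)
The plan is to construct $U^{\mathbf{\Pi}^0_1}$ by a standard coding of closed sets through their complements, using a fixed countable basis. Fix once and for all a countable basis $\{V_n\}_{n\in\om}$ for the topology of $X$, with $V_0=\emptyset$ for convenience. Every closed subset of $X$ is the complement of a union of basic open sets, so I encode a closed set by a sequence $\al\in\oom$ that enumerates (possibly with repetitions and padded by $0$'s) the indices of the basic open sets in its complement. Concretely, define
\[
U^{\mathbf{\Pi}^0_1}=\bigl\{(\al,x)\in\oom\times X:\forall n\in\om\ \ x\notin V_{\al(n)}\bigr\}.
\]
Since each condition ``$x\notin V_{\al(n)}$'' depends only on the coordinate $\al(n)$ and on $x$, and the set $\{(k,x):x\notin V_k\}$ is closed in $\om\times X$, the whole set $U^{\mathbf{\Pi}^0_1}$ is an intersection over $n$ of closed sets, hence closed in $\oom\times X$.

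Next, given a Polish space $P$ and a closed $C\in\mathbf{\Pi}^0_1(P\times X)$, I work with the open complement $(P\times X)\setminus C$. Fixing a countable basis $\{W_k\}_{k\in\om}$ for $P$, the family $\{W_k\times V_n:k,n\in\om\}$ is a countable basis for $P\times X$, so there is a set $S\subset\om\times\om$ with
\[
(P\times X)\setminus C=\bigcup_{(k,n)\in S}W_k\times V_n.
\]
Enumerate $S$ as $(k_i,n_i)_{i\in\om}$ (allowing $S=\emptyset$ by taking all pairs to correspond to $V_0=\emptyset$ if needed) and define $f:P\to\oom$ by
\[
f(p)(i)=\begin{cases} n_i & \text{if } p\in W_{k_i},\\ 0 & \text{otherwise.}\end{cases}
\]
Each coordinate function $p\mapsto f(p)(i)$ takes only two values and switches on the open set $W_{k_i}$, so $f$ is Borel (in fact, Baire class $1$, and it can be arranged to be continuous by choosing the enumeration more carefully).

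It remains to verify $U^{\mathbf{\Pi}^0_1}_{f(p)}=C_p$. Unwinding: $x\in U^{\mathbf{\Pi}^0_1}_{f(p)}$ iff $x\notin V_{f(p)(i)}$ for all $i$, which (using $V_0=\emptyset$) says exactly that for every $i$ with $p\in W_{k_i}$, $x\notin V_{n_i}$; equivalently, $(p,x)\notin\bigcup_{i}W_{k_i}\times V_{n_i}=(P\times X)\setminus C$, i.e.\ $x\in C_p$. The main technical subtlety is precisely this bookkeeping between the two countable bases and ensuring the padding with $V_0=\emptyset$ handles both the empty case and positions where $p\notin W_{k_i}$; once the coding is set up correctly, measurability of $f$ and the sectional identity fall out mechanically.
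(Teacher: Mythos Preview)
Your argument is correct. The paper does not prove this statement at all: it is recorded as a cited fact from Moschovakis (Section 3.H) and used as a black box, so there is no ``paper's proof'' to compare against. What you have written is precisely the standard construction of a good universal closed set via a fixed countable basis, and your verification that $U^{\mathbf{\Pi}^0_1}_{f(p)}=C_p$ is clean. One minor quibble: the parenthetical remark that $f$ ``can be arranged to be continuous by choosing the enumeration more carefully'' is not obviously true with this particular coding (the coordinate functions are characteristic-function-like on open, not clopen, sets), but since the statement only requires $f$ to be Borel this does not affect the proof.
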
 
	
	We will identify a set $x \in \omm$ with its increasing enumeration. As usually, $x \leq^* y$ if $|\{n:x(n) > y(n)\}|<\infty$ and $x \leq y$ holds if for every $n \in \N$ we have $x(n) \leq y(n)$. A set $S \subset \omm$ is \textit{dominating} if for any $y \in \omm$ there exists an $x \in S$ with $y \leq^*x$. We will use the abbreviation $y \leq ^\infty x$ for $|\{n:y(n) \leq x(n)\}|=\infty$.
	
	Notions and facts from effective descriptive set theory will be applied, however, the proofs can be understood using them as ``black boxes". 
	
	If $X$ is a recursively presented Polish space (in our arguments only the spaces $\N, \oom$ and their finite products will appear in such a role) and $p \in \oom$, $\Pi^0_1(X;p)$, $\lbo(X;p)$, $\ls(X;p)$ and $\lp(X;p)$ will stand for the appropriate lightface classes relative to $p$. In the case $X=\om$ the ``$\N$" sometimes will be omitted, and $\lbo(p)$, etc. will be used. If $p \in \oom$ we will denote the first ordinal non-recursive in $p$ by $\omega^p_1$. For an ordinal $\alpha$ and a set $A$ the $\alpha$'s level of the constructible universe relative to $A$ will be denoted by $L_\alpha[A]$. If $p,q \in \oom$, $\Pi^0_1(X;p,q), \omega^{p,q}_1$ etc. will abbreviate the notions $\Pi^0_1(X;\langle p,q \rangle), \omega^{\langle p,q \rangle}_1$ etc., where $\langle \cdot,\cdot \rangle: \oom \times \oom \to \oom$ is a recursive bijection. 
	
	We collect the theorems of the effective theory used in the proof.
	
	\begin{fact}
		\label{f:effective} For any reals $r,p \in \oom$ we have
		\begin{enumerate}
			\item (\cite[Section 3.7]{chong2015recursion}) \label{f:effd} $r \in \Delta^1_1(\om;p) \iff$ $\{r\} \in \Delta^1_1(\oom;p) \iff r \in L_{\omega^p_1}[p]$,
			\item (see below) \label{f:basis} $\omega^{r}_1<\omega^{r,p}_1$ implies that every nonempty $A \in \Sigma^1_1(\oom,r)$ contains an element in $\Delta^1_1(r,p)$,
			\item  \label{f:pi1} (\cite[Theorem 4.1.2]{chong2015recursion}) if $S$ is a nonempty $\Pi^1_1(s)$ set then there exists an $r \in S \cap L_{\omega_1}[s]$.
			\item (Kleene, folklore)
			\label{f:kleene}
			\cite[4D.3-4]{moschovakis2009descriptive}) Suppose that $X$ and $Y$ are recursively presented Polish spaces and $C \subset X \times Y$ is a $\Pi^1_1(p)$ set. Then 
			
			\begin{enumerate}
				\item the set $\{x:(\exists y \in \Delta^1_1(x,p))((x,y)\in C)\}$ is $\Pi^1_1(p)$,
				\item $C$ has a full Borel uniformization if and only if there exists a real $q$ so that for every $x$ we have $\Delta^1_1(x,q) \cap C_x \not = \emptyset$.
			\end{enumerate}
		\end{enumerate}	
	\end{fact}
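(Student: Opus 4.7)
The plan is to establish the four items in order as a classical package from hyperarithmetic theory due to Kleene, Spector, Gandy, and Kondo, using earlier items as lemmas for later ones; I expect the Spector--Gandy equivalence in (1) and the Kleene packaging in (4)(a) to be the genuinely hard parts, while (3) is a clean absoluteness argument.

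For item (1), the equivalence between $r \in \Delta^1_1(\N;p)$ and $\{r\} \in \Delta^1_1(\oom;p)$ is routine: the forward direction notes that $\{r\} = \bigcap_n \{x : x(n) = r(n)\}$ is a $\Delta^1_1(p)$ intersection of clopen conditions, while the reverse direction recovers $r$ via the identity $r(n) = m \iff (\forall x \in \{r\})(x(n)=m) \iff (\exists x \in \{r\})(x(n) = m)$. The harder equivalence with $r \in L_{\omega^p_1}[p]$ is Spector--Gandy; I would prove it by induction on the hyperarithmetic hierarchy, showing on one hand that the stages of $L_\alpha[p]$ for $\alpha < \omega^p_1$ can be computed by a $\Delta^1_1(p)$ recursion along a recursive system of ordinal notations, and on the other that any $\Delta^1_1(p)$-definable real folds into a member of $L_{\omega^p_1}[p]$ via Kleene's normal form.

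For item (2), I would write a given nonempty $A \in \Sigma^1_1(\oom,r)$ as $A = \proj_0[T]$ for a tree $T$ recursive in $r$, and recall that wellfoundedness of a subtree $T_s$ is equivalent to its rank being below $\omega^r_1$. Under the hypothesis $\omega^r_1 < \omega^{r,p}_1$, the ordinal $\omega^r_1$ is countable inside the admissible structure $L_{\omega^{r,p}_1}[r,p]$, so using item (1) the predicate ``$T_s$ is wellfounded'' becomes $\Delta^1_1(r,p)$; one then constructs $z \in A$ by a $\Delta^1_1(r,p)$ recursion letting $z(n)$ be the least integer keeping $T_{z|_{n+1}}$ illfounded. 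Item (3) I handle by Shoenfield absoluteness: nonemptiness of $S$ is $\Sigma^1_2(s)$ and hence absolute between $V$ and $L[s]$, so $S$ has an element in $L[s]$, and every such element lies in $L_\alpha[s]$ for some countable $\alpha$, hence in $L_{\omega_1}[s]$.

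For (4)(a), I would invoke the good universal pair $(\mb{A}(Y),\mb{C}(Y))$ of Fact \ref{f:prel}: a real $y$ is $\Delta^1_1(x,p)$ precisely when some integer $e$ codes a $\Pi^1_1(x,p)$-singleton equal to $\{y\}$, and the relation ``$e$ codes a $\Delta^1_1(x,p)$-real lying in $C_x$'' packages as a $\Pi^1_1(p)$ condition on $(x,e)$ by the Kreisel--Kleene reduction; the existential integer quantifier over $e$ preserves $\Pi^1_1(p)$. For (4)(b), the forward direction is immediate since a Borel uniformization with code $q$ makes every value $\Delta^1_1(x,q)$ uniformly. For the converse, assuming every $C_x$ meets $\Delta^1_1(x,q)$, I would apply (4)(a) together with Kondo uniformization to obtain a $\Pi^1_1(q)$-selector whose values lie in $\Delta^1_1(x,q)$ uniformly in $x$; the standard parametrization of hyperarithmetic codes then shows this selector has a Borel graph.
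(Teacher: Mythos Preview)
Your sketches are correct, and since the paper cites references for items (1), (3), and (4) rather than proving them, the only genuine comparison point is item (2).

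For (2) the paper takes a shorter, more modular route than you do: it first invokes Spector's theorem to conclude that $\omega^{r}_1<\omega^{r,p}_1$ forces $\mc{O}^{r}\in\Delta^1_1(r,p)$ (where $\mc{O}^{r}$ is Kleene's $\mc{O}$ relativized to $r$), and then applies Gandy's basis theorem in the form ``every nonempty $\Sigma^1_1(r)$ set contains a $\Delta^1_1(\mc{O}^{r})$ real'' to finish. Your argument instead unfolds the leftmost-branch construction directly: you use the hypothesis to make $\omega^{r}_1$ a $\Delta^1_1(r,p)$ ordinal, turn ``$T_s$ is wellfounded'' into a $\Delta^1_1(r,p)$ predicate via rank-boundedness, and recursively select the least coordinate keeping the tree illfounded. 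This is essentially the proof of Gandy's basis theorem carried out in situ. The paper's version is cleaner as a citation chain; yours is more self-contained and makes explicit where the ordinal hypothesis is actually spent, which is arguably more informative for a reader who has not internalized Gandy's basis theorem. Your Shoenfield-absoluteness argument for (3) is also a legitimate alternative to the direct basis-theorem citation the paper gives.
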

	
	To see \eqref{f:basis}, using Spector's theorem (\cite[Lemma 2.4.9]{chong2015recursion}) we obtain that $\omega^{r}_1<\omega^{r,p}_1$ implies $\mc{O}^{r} \in \Delta^1_1(r,p)$, where $\mc{O}^{r}$ stands for Kleene's $\mc{O}$ relative to $r$. Thus, as every non-empty $\Sigma^1_1(r)$ set contains a $\Delta^1_1(\mc{O}^{r})$ real by Gandy's basis theorem (\cite[Theorem 2.5.3]{chong2015recursion}), such a set contains a $\Delta^1_1(r,p)$ real as well.

	Let $\mathbf{\Gamma}$ be a family of subsets of Polish spaces. A subset $A$ of a Polish space $X$ (and similarly for a standard Borel space) is \textit{$\mb{\Gamma}$-hard}, if for any $A'\in \mb{\Gamma}$ subset of a Polish space $X'$ there exists a Borel map $f:X' \to X$ with $x' \in A' \iff f(x') \in A$ for every $x'\in X'$. A $\mb{\Gamma}$-hard set that is in $\mb{\Gamma}$ is called \textit{$\mb{\Gamma}$-complete}. For a graph $G=(X,E)$ the \textit{$\mb{\Gamma}$-measurable chromatic number} or \textit{$\mb{\Gamma}$ chromatic number} is defined analogously to the Borel chromatic number with requiring the coloring function to be $\mb{\Gamma}$-measurable, and denoted by $\chi_{\mb{\Gamma}}(G)$. $\chi(G)$ stands for the (usual) chromatic number of the graph $G$. A set $S \subset X$ will be called \textit{independent} or \textit{$E$-independent} if $S^2 \cap E=\emptyset$. Note that if $\mb{\Gamma}$ is closed under finite unions then for $n \in \N$ the existence of a $\mb{\Gamma}$-measurable $n$-coloring of the graph $(X,E)$ is equivalent to the existence of a partition of $X$ to $n$-many $E$-independent sets from $\mb{\Gamma}$.
	
	The Effros Borel space of the closed subsets of a Polish space $X$ will be denoted by $F(X)$. 
	
	\begin{definition}
		\label{d:ond}	
		Let $\bbg$ be a class of subsets of Polish spaces. A map $\Phi:\bbg(X) \to \bp(Y)$ is said to be \emph{$\bp$ on $\bbg$}, if for every Polish space $P$ and $A \in \bbg(P \times X)$ we have $\{(s,y)\in P \times Y:y \in \Phi(A_s)\} \in \mathbf{\Pi}^1_1(P \times Y).$

	\end{definition}
	
	Note that if for some $\Phi$ the above condition holds for $P=\oom$ and $\bbg$ is closed under continuous preimages then $\Phi$ is $\bp$ on $\bbg$: indeed, given $A \in \bbg(P \times X)$ one can fix a continuous map $\phi:\oom \to P$ that is bijective on a closed set $C \subset \oom$, and $\phi|_C$ is a Borel isomorphism (see \cite[Theorem 1G.2]{moschovakis2009descriptive}) and pull back $A$ with the map $(\phi, \id_X)$ to obtain a set $A' \in \bbg(\oom \times X)$. Then using the condition for $\oom$ yields that $\{(s,y)\in \oom \times Y:y \in \Phi(A'_s)\} \in \bp(\oom \times Y)$, so $\{(s,y)\in C \times Y:y \in \Phi(A'_s)\} \in \bp(\oom \times Y)$. But then $(\phi|_C, \id_Y)(\{(s,y)\in C \times Y:y \in \Phi(A'_s)\} )=\Phi(A) \in \bp(P \times Y)$, as $(\phi|_C, \id_Y)$ is a Borel isomorphism.

	\section{General results}
	\label{s:general}
	In this section we prove Theorem \ref{t:main2} about the complexity of uniform families. So, let $\Phi$, $X$, $Y$, $\mathbf{\Gamma}$ be as in the theorem. Before starting the proof we make two easy observations. First, without loss of generality we can assume that $Y=\N^\N$: indeed, composing $\Phi$ with a Borel bijection between $Y$ and $\N^\N$ neither the families $\mc{F}^\Phi$ and $\mc{U}^\Phi$, nor the fact that $\Phi$ is $\bp$ on $\bbg$ changes. So, from now on we assume that $Y=\om^\om$. Second, if one replaces $\N^\N$ by its homeomorphic copy in the definition of nicely $\bs$-hard on $\bbg$ families (Definition \ref{d:nicely}), it yields an equivalent condition on the family $\mc{F}^\Phi$. Throughout the proof we will frequently use this, e.g. saying that ``identify the space $\oom$ with $(\oom)^2$".
	
	Let us roughly sketch the ideas of the proof of Theorem \ref{t:main2}.  Firstly, since $\mc{F}^\Phi$ is $\bs$-hard for a given $r \in \oom$ a diagonal argument yields a set $B \in \bbg(X)$ such that $\Phi(B)$ is nonempty, but contains no $\Delta^1_1(r)$ elements. Also, one can show that the set $\{c:\text{$c$ codes the $\bs$ set $A$ and } \sup \{\omega^{r,c}_1: r\in A\}<\omega_1\}$ is $\mathbf{\Sigma}^1_2$-hard,  our strategy is to reduce this set to the codes of the sets in $\mc{U}^\Phi$. So, to  a given $A \in \bs(\oom)$ with code $c$ using (a uniform version of) the diagonalization we construct a set $B \in \bbg(\oom \times X)$ so that for all $r$ we have $\Phi(B_r) \not = \emptyset$ and $\Phi(B_r) \cap \Delta^1_1(r,c) = \emptyset \iff r \in A$. From this and \eqref{f:kleene} of Fact \ref{f:effective}, it will easily follow that if $\sup \{\omega^{r,c}_1: r\in A\}=\omega_1$ for some $A$ with code $c$ then the corresponding set $B \not \in \mc{U}^{\Phi}$. Finally, the niceness of $\mc{F}^\Phi$ and \eqref{f:basis} of Fact \ref{f:effective} will yield the converse.
	
	We start with the diagonal argument.

	\begin{lemma}
		\label{l:main0} Let $S\in \bp(\N^\N)$ be arbitrary. There exist a real $q \in \N^\om$ and sets $B \in \bbg(\N^\N \times\N^\N \times X)$, $D \in \bs(\N^\N \times\N^\N)$ such that for every $s \in \N^\N$ we have
		\begin{enumerate}
			\item \label{c:main1} $s \in S \iff \forall t \in \N^\N (\Phi(B_{(s,t)}) \cap  \Delta^1_1(q,s,t) \not = \emptyset)$,
			\item \label{c:main2} $\forall t\in \N^\N((\Phi(B_{(s,t)}) \cap  \Delta^1_1(q,s,t) = \emptyset) \implies (D_s\not= \emptyset \land D_s \subset \Phi(B_{(s,t)})))$.
		\end{enumerate}

	\end{lemma}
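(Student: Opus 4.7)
The lemma is a uniform, parameterized diagonalization, and my plan is to reduce it to a single application of nice $\bs$-hardness of $\mc{F}^\Phi$ against an $\bs$-set that simultaneously encodes $S^c$ and a Borel-code enumeration of candidate $\Delta^1_1$-witnesses. Since $S \in \bp(\N^\N)$, I first fix a Borel $R \subset \N^\N \times \N^\N$ with $s \notin S \iff \exists t\, (s,t) \in R$, so that $s \in S \iff \forall t\, (s,t) \notin R$. The target is to arrange
\[
(\ast)\qquad \Phi(B_{(s,t)}) \cap \Delta^1_1(q,s,t) = \emptyset \iff (s,t) \in R,
\]
from which \ref{c:main1} is immediate.

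To produce $B$ and $q$, I would define a $\bs$ subset $\widetilde A \subset \N^\N \times \N^\N \times \N^\N$ whose $(s,t,c)$-membership records both ``$(s,t) \in R$'' and additional bookkeeping tying $c$ to a universal Borel code for singletons in $Y = \N^\N$ via the sets $\mb{A}(Y)$, $\mb{C}(Y)$ of Fact \ref{f:prel}. Identifying $(\N^\N)^3$ with $\N^\N$ by a recursive bijection and invoking nice $\bs$-hardness of $\mc{F}^\Phi$ on $\bbg$, I obtain $\widetilde B \in \bbg(\N^\N \times X)$ and $\widetilde D \in \bs(\N^\N \times Y)$ with $\widetilde D \subset \Phi(\widetilde B)$ and the characterizing biconditional $\Phi(\widetilde B_u) \neq \emptyset \iff \widetilde D_u \neq \emptyset \iff u \in \widetilde A$. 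I then define $B_{(s,t)}$ as a controlled combination (in $\bbg$, using closure under finite unions) of the sections $\widetilde B_{(s,t,c)}$, and $D_s$ by selecting a canonical section of $\widetilde D$ depending only on $s$; the parameter $q$ is chosen to compute a Borel code of $\widetilde B$ together with the coding apparatus, so that by Definition \ref{d:ond} the predicate ``$y \in \Phi(B_{(s,t)})$'' is uniformly $\bp(q,s,t,y)$.

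The ``$\Rightarrow$''-direction of $(\ast)$, namely that on $(s,t) \in R$ no $\Delta^1_1(q,s,t)$-real appears in $\Phi(B_{(s,t)})$, is intended to follow from niceness: candidate witnesses $y$ of $\Delta^1_1(q,s,t)$-complexity are enumerated by Borel codes $c$ in $\Delta^1_1(q,s,t)$, and the sections $\widetilde B_{(s,t,c)}$ are arranged so that $\Phi(\widetilde B_{(s,t,c)}) \neq \emptyset$ happens only when $(s,t,c) \in \widetilde A$, which the encoding makes incompatible with $y \in \Phi(B_{(s,t)})$ for such $y$. The ``$\Leftarrow$''-direction, producing a $\Delta^1_1(q,s,t)$-element of $\Phi(B_{(s,t)})$ whenever $(s,t) \notin R$, is to be handled by forcing a canonical Borel-coded real into $\Phi(B_{(s,t)})$ through the union construction. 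Condition \ref{c:main2} is then automatic: if the intersection is empty, then $(s,t) \in R$ and hence $s \notin S$, so $D_s$ as defined is a nonempty $\bs$-subset of $\Phi(B_{(s,t)})$ by $\widetilde D \subset \Phi(\widetilde B)$.

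The main obstacle is the witness-forcing step: nice $\bs$-hardness of $\mc{F}^\Phi$ only controls non-emptiness of $\Phi(\cdot)$, not the definability of its elements. Reconciling this with the need to deposit $\Delta^1_1(q,s,t)$-reals into $\Phi(B_{(s,t)})$ in the off-$R$ case while keeping all such reals out in the on-$R$ case is where Fact \ref{f:effective}\eqref{f:kleene}(a) (the $\bp$-ness of the set of $x$ for which some $\Delta^1_1(x,\cdot)$-real lies in a $\bp$-section) together with $\bp$-on-$\bbg$-ness of $\Phi$ become essential: they guarantee that $\{(s,t) : \Phi(B_{(s,t)}) \cap \Delta^1_1(q,s,t) \neq \emptyset\}$ sits at the right complexity level to be identified with $\N^\N \setminus R$ via the niceness reduction.
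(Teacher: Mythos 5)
Your proposal correctly identifies the tools in play (nice $\bs$-hardness, a universal $\bs$ set, a Borel coding apparatus, and Fact \ref{f:effective}\eqref{f:kleene}(a) to keep the relevant set $\bp$), and you even put your finger on the one hard spot: nice $\bs$-hardness controls only whether $\Phi(\cdot)$ is nonempty, not which reals it contains. But the resolution you sketch for this obstacle does not work, and the step that actually bridges the gap in the paper is missing.

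Your plan is to ``arrange'' the sections $\widetilde B_{(s,t,c)}$ so that $\Phi(B_{(s,t)})$ avoids all $\Delta^1_1(q,s,t)$-reals when $(s,t)\in R$ and contains one when $(s,t)\notin R$, by enumerating codes $c$. Nice $\bs$-hardness gives you no leverage to do either: it only reflects membership of $(s,t,c)$ in a $\bs$ set $\widetilde A$ into emptiness/nonemptiness of $\Phi(\widetilde B_{(s,t,c)})$; it says nothing about whether specific $\Delta^1_1$-reals end up in $\Phi$ of a combined section. What the paper does instead is a recursion-theorem-style \emph{self-referential diagonalization}. One first applies niceness once, to a universal diagonal $\bs$ set $A$ (via a $\bs$-universal set for $\bp$, diagonalized by $(r,s)\mapsto(r,r,s)$), getting $B^0, D^0$ with $D^0_{(r,s)}\subset\Phi(B^0_{(r,s)})$ and $(r,s)\in A \iff \Phi(B^0_{(r,s)})\neq\emptyset \iff D^0_{(r,s)}\neq\emptyset$. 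Then the crucial set $R$ is defined \emph{in terms of $\Phi(B^0)$ itself}:
\[
R=\{(r,s):\forall t\,((s,t)\notin C \ \lor\ \exists u\in\Delta^1_1(q,s,t)\,(u\in\Phi(B^0_{(r,s)})))\}\in\bp,
\]
and universality supplies an $r_0$ with $A_{r_0}=\oom\setminus R_{r_0}$. It is exactly this fixed-point circularity — $R$ quantifies over $\Delta^1_1(q,s,t)$-witnesses in $\Phi(B^0)$, and the resulting $r_0$-slice of the same $B^0$ is what you put into $B$ on $(s,t)\in C$ — that makes the ``no $\Delta^1_1(q,s,t)$-witness'' direction provable by contradiction, without ever having to manipulate individual reals in $\Phi$. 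Your proposal has no analogue of this fixed-point step, and the phrase ``arrange $\widetilde B_{(s,t,c)}$'' does not substitute for it.

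There are two further mismatches. First, your target biconditional $(\ast)$ is stronger than the lemma and stronger than what the paper's construction delivers: the paper only produces, for $s\notin S$, \emph{some} $t$ with $\Delta^1_1(q,s,t)\cap\Phi(B_{(s,t)})=\emptyset$ (via $(r_0,s)\notin R$), not a characterization of $R$ section-by-section. Second, the ``canonical witness'' in the off-case is far simpler than depositing coded reals: one fixes any $(r^*,s^*)\in A$ and $q\in\Phi(B^0_{(r^*,s^*)})$, sets $B_{(s,t)}=B^0_{(r^*,s^*)}$ whenever $(s,t)\notin C$, and then $q\in\Delta^1_1(q,s,t)\cap\Phi(B_{(s,t)})$ is automatic. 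You should also note that $D$ depends only on $s$ because the paper takes $D=D^0_{r_0}$ (a slice by the fixed-point parameter, not by $c$), another consequence of the diagonalization missing from your plan.
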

	
	\begin{proof}
		First we construct an auxiliary set $A$ for the sake of the diagonal argument. Take a universal set $U\in \bs(\N^\N \times \N^\om \times    \N^\om)$ so that for any $R \in \bp(\om^\om \times \N^\om)$ there exists an $r \in \om^\om$ with $U_{r}=(\N^\N \times \N^\om) \setminus R$. Define $(r,s) \in A \iff (r,r,s) \in U$. Then clearly $A \in \bs( \om^\N \times \N^\om)$ and if $R \in \bp(\om^\om \times \N^\om)$ is arbitrary then for some $r \in \N^\N$ we have $(\N^\N \times \N^\om) \setminus R=U_{r}$, so by definition $A_{r}=U_{(r,r)}=\oom \setminus R_r$.
		
		Using the fact that $\mc{F}^\Phi$ is nicely $\bs$-hard (and identifying $\N^\N \times \N^\N$ with $\N^\N$ by a homeomorphism) we obtain sets $B^{0} \in \bbg(\N^\N \times \N^\N \times X)$ and $D^{0} \in \bs(\N^\N \times \N^\N \times \N^\N)$ such that for every $(r,s)$ we have
		\begin{equation}
		\label{e:basic}
		(r,s) \in A \iff \Phi(B^{0}_{(r,s)}) \not = \emptyset \iff D^{0}_{(r,s)} \not =\emptyset\end{equation}
		
		and $D^{0}_{(r,s)} \subset \Phi(B^{0}_{(r,s)})$. Pick also an arbitrary pair $(r^*,s^*) \in A$ and let $q \in \Phi(B^0_{(r^*,s^*)})$. Fix a $C \in
		\mathbf{\Pi}^0_1((\N^\N)^2)$ so that $S=\N^\N \setminus \proj_0(C)$ and define
		
		\[R=\{(r,s) \in \N^\N \times \N^\N:\forall t\in \oom ((s,t) \not \in  C \lor \exists u \in \Delta^1_1(q,s,t)(u \in \Phi(B^{0}_{(r,s)})))\}.\]
		
		Since $\Phi$ is $\bp$ on $\bbg$, the set $\Phi(B^{0})$ is $\bp$, and, using \eqref{f:kleene} of Fact \ref{f:effective} we get $R \in \bp(\om^\om \times \om^\om)$. Then by the choice of $A$ there exists an $r_0 \in \om^\om$ so that $A_{r_0}=\oom \setminus R_{r_0}$.
		
		Define now $(s,t,x) \in B \iff $
		\[((s,t) \in C \text{ and } x \in B^{0}_{(r_0,s)})\text{ or }((s,t) \not \in C \text{ and }
		x \in B^{0}_{(r^*,s^*)})\]
		and $D=D^0_{r_0}$.
		
		We claim that $q$, $B$ and $D$ satisfy the requirements of the Lemma. Indeed, as $\bbg$ is closed under continuous preimages the sets  $\{(s,t,x):x \in B^0_{(r_0,s)} \}$, $\{(s,t,x):x \in B^0_{(r^*,s^*)} \}\in  \bbg$, while using the closedness under finite unions and intersections and $\mathbf{\Pi}^0_1 \cup \mathbf{\Sigma}^0_1 \subset \bbg$ we have $B \in \bbg$. Moreover, $D \in \bs$ by definition.
		
		We check now that \eqref{c:main1} and \eqref{c:main2} hold.

		\textit{Case 1.} $s \in S$. Then for every $t$ we have $(s,t) \not \in C$ so $B_{(s,t)}=B^{0}_{(r^*,s^*)}$. Thus, $q \in \Phi(B_{(s,t)})$, hence $\Delta^1_1(q,s,t) \cap \Phi(B_{(s,t)}) \not = \emptyset.$ So  both \eqref{c:main1} and \eqref{c:main2} hold for every such $s$.
		
		\textit{Case 2.} $s \not \in S.$ Observe that $(r_0,s) \not \in R$: indeed, pick an $(s,t) \in C$, then if $(r_0,s) \in R$ was true, then there would exist a $u \in \Phi(B^0_{(r_0,s)})$. But this is absurd, by \eqref{e:basic} and the fact that $A_{r_0}=\N^\N \setminus R_{r_0}$.
		
		Now suppose that for all $t$ there exists a $u \in \Delta^1_1(q,s,t) \cap \Phi(B_{(s,t)})$. Then, there exists an $(s,t) \in C$ and we have $\Phi(B_{(s,t)})=\Phi(B^0_{(r_0,s)})$, yielding $(r_0,s) \in R$, a contradiction. This completes the proof of \eqref{c:main1}.
		
		In order to see \eqref{c:main2} note that if for some $t$ we have $(s,t) \not \in C$ then $B_{(s,t)}=B^0_{(r^*,s^*)}$, hence $q \in \Phi(B_{(s,t)})$. Thus, it is enough to check \eqref{c:main2} for $(s,t) \in C$ with $\Phi(B_{(s,t)}) \cap \Delta^1_1(q,s,t) = \emptyset$. But then, $B_{(s,t)}=B^0_{(r_0,s)}$ and $D_s=D^0_{(r_0,s)} \subset \Phi(B^0_{(r_0,s)})$. So, what remains to prove is $D^0_{(r_0,s)}=D_s \not = \emptyset$, or by \eqref{e:basic} equivalently $(r_0,s) \in A$, that is, $(r_0,s) \not \in R$, which we already have shown.
		
	\end{proof}
	Now we are ready to prove Theorem \ref{t:main2}.
	
	\begin{proof}[Proof of Theorem \ref{t:main2}]
		Let $A$ be a $\mathbf{\Sigma}^1_2$-complete $\Sigma^1_2$ subset of $\om^\om$ and find $S' \in \Pi^1_1(\om^\om \times \om^\om)$ with $A=\proj_0(S')$. Define
		\[S=\{(q,r,s):\exists u \in \Delta^1_1(q,r,s)((r,u) \in S')\}\]
		and apply Lemma \ref{l:main0} to the set $S$ (after identifying $\N^\N$ with $(\N^\N)^3$). This yields a real $q'$  and sets $B' \in \bbg((\N^\N)^3 \times \oom \times X)$ and $D' \in \bs((\N^\N)^3 \times \oom)$ satisfying \eqref{c:main1} and \eqref{c:main2} of Lemma \ref{l:main0} for every $(q,r,s)$ triple.
		
		Using the fact that $\Phi$ is $\bp$ on $\bbg$ we can find a real $q_0$ so that $\Phi(B') \in \Pi^1_1(q_0)$ and $D' \in \ls(q_0)$, we can also assume that $q' \in \Delta^1_1(q_0)$.
		
		Define $B \in \bbg((\oom)^2 \times X)$ as follows:
		\[(r,\langle s,t\rangle,x) \in B \iff (q_0,r,s,t,x) \in B'\]       
		(recall that $\langle\cdot,\cdot \rangle$ was a recursive homeomorphism between $(\oom)^2$ and $\oom$.)
		
		By \eqref{c:main1} of Lemma \ref{l:main0} for every $(q,r,s) \in (\N^\N)^3$ we have that \begin{equation}
		(q,r,s) \in S \iff \forall t \in \N^\N (\Phi(B'_{(q,r,s,t)}) \cap  \Delta^1_1(q',q,r,s,t) \not = \emptyset),
		\label{e:bpprop}
		\end{equation} hence using the definition of $q_0$ and $B$ we get that
		\begin{equation}
		(q_0,r,s) \in S \iff \forall t \in \N^\N (\Phi(B_{(r,\langle s,t\rangle)}) \cap  \Delta^1_1(q_0,r,s,t) \not = \emptyset).
		\label{e:bprop}
		\end{equation}

		We show that for every $r$ we have $B_r \in \mc{U}^\Phi$ if and only if $r \in A$, which is clearly sufficient to prove the theorem.
		Suppose that $r \not \in A$ and for the contradiction that $B_r \in \mc{U}^\Phi$. By definition this implies that $\Phi(B_r)$ has a full Borel uniformization. Consequently, by Fact \ref{f:effective} there exists a $p_0 \in \N^\om$ such that $\forall s,t \in \N^\N(\Delta^1_1(p_0,s,t) \cap \Phi(B_{(r,\langle s,t\rangle)}) \not = \emptyset),$ so
		\begin{equation}
		\label{e:trivi}
		\forall s, t \in \N^\N(\Delta^1_1(p_0,s,t) \cap \Phi(B'_{(q_0,r,s,t)})\not = \emptyset),
		\end{equation}
		moreover, $q',q_0,r \in \Delta^1_1(p_0)$ can be also assumed. As $S'_r=\emptyset$, by definition $S_{(q_0,r)}$ is also empty. Thus, by \eqref{e:bpprop} for every $s \in \N^\om$ there exists a $t \in \N^\N$ such that $\Delta^1_1(q',q_0,r,s,t) \cap \Phi(B'_{(q_0,r,s,t)})=\emptyset$. Applying this to $s=p_0$ we get a real $t_0$ with $\emptyset=\Delta^1_1(q',q_0,r,p_0,t_0) \cap \Phi(B'_{(q_0,r,p_0,t_0)})=\Delta^1_1(p_0,t_0)\cap \Phi(B'_{(q_0,r,p_0,t_0)})$. This yields a contradiction with \eqref{e:trivi} for $s=p_0, t=t_0$.
		
		Now suppose that $r \in A$.
		
		{\sc Claim.} $\sup \{\omega^{q_0,r,s}_1: s \not \in S_{(q_0,r)}\}<\omega_1$.

		Otherwise, $\bigcup_{s \not \in S_{(q_0,r)}}L_{\omega^{q_0,r,s}_1}[q_0,r] = L_{\omega_1}[q_0,r]$. By Fact \ref{f:effective} \eqref{f:pi1}, as $S'_{r}$ is a nonempty $\Pi^1_1(r)$ set (because $r \in A =\proj_0(S')$) we have that there exists a $u \in S'_{r} \cap L_{\omega_1}[r]$. Then for some $s \not \in S_{(q_0,r)}$ we would have $u \in L_{\omega^{q_0,r,s}_1}[r]\subseteq L_{\omega^{q_0,r,s}_1}[q_0,r,s]$, so $u \in \Delta^1_1(q_0,r,s)$ (by Fact \ref{f:effective} \eqref{f:effd}), contradicting the definition of $S$.

		Now pick a $p$ with $\omega^p_1>\sup \{\omega^{q_0,r,s}_1: s \not \in S_{(q_0,r)}\}$. We will show that $\Phi(B_r)$ has a full Borel uniformization. In order to do this, by Fact \ref{f:effective} \eqref{f:kleene} it is enough to show that for every $s,t \in \N^\N$ we have that $\Phi(B_{(r,\langle s,t\rangle)}) \cap \Delta^1_1(p,q_0,r,s,t) \not = \emptyset$. If $s \in S_{(q_0,r)}$ then this holds by \eqref{e:bprop}.
		
		Now, we can assume that $s \not \in S_{(q_0,r)}$ and $\Phi(B_{(r,\langle s,t\rangle)}) \cap \Delta^1_1(q_0,r,s,t) = \emptyset$, since if the equality is not true then we are already done.
		Then $\Phi(B'_{(q_0,r,s,t)}) \cap \Delta^1_1(q_0,r,s,t) = \emptyset$. Recall the definition of $D'$: it has been obtained from the application of Lemma \ref{l:main0} to the set $S$. Hence, by \eqref{c:main2} of Lemma \ref{l:main0} for every $(q_0,r,s)$ we have that for every $t \in \N^\N$ the implication
		\[(\Phi(B'_{(q_0,r,s,t)}) \cap  \Delta^1_1(q_0,r,s,t) = \emptyset) \implies (D'_{(q_0,r,s)}\not= \emptyset \land D'_{(q_0,r,s)}\subset \Phi(B'_{(q_0,r,s,t)}))\]
		holds. Hence, in our case we have $\emptyset \not =D'_{(q_0,r,s)} \subset \Phi(B'_{(q_0,r,s,t)})=\Phi(B_{(r,\langle s,t \rangle)})$.
		Then $D'_{(q_0,r,s)}$ is a nonempty $\Sigma^1_1(q_0,r,s)$ set and $\omega^{q_0,r,s}_1<\omega^{p,q_0,r,s}_1$ so by Fact \ref{f:effective} \eqref{f:basis} it contains a $\Delta^1_1(p,q_0,r,s)$ real. Thus, $\emptyset \not =\Delta^1_1(p,q_0,r,s,t) \cap D'_{(q_0,r,s)} \subset \Delta^1_1(p,q_0,r,s,t) \cap \Phi(B_{(r,\langle s,t\rangle)})$ which shows that $\Phi(B_r)$ has a full Borel uniformization and finishes the proof of the theorem.
	\end{proof}
	In our theorem the reason of the high complexity is the same phenomenon as in the complexity results of Adams and Kechris \cite{adams2000linear}. In fact, one of their results follows directly from our theorem.
	\begin{corollary} \label{c:kechadams}
		(Adams, Kechris) The set of trees $T$ on $\N \times \N$ so that $[T]$ (the set of the infinite branches of $T$) has a full Borel uniformization is $\mathbf{\Sigma}^1_2$-complete.
	\end{corollary}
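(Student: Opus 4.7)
The plan is to deduce this from Theorem \ref{t:main2}. Take $X = Y = \oom$, $\bbg = \bbo$ (the class of Borel subsets of Polish spaces), and let $\Phi : \bbo(\oom) \to \bp(\oom)$ be the identity map (valid because $\bbo \subset \bp$). All hypotheses are readily verified: $\bbo$ is closed under continuous preimages and finite Boolean operations and contains $\mathbf{\Pi}^0_1 \cup \mathbf{\Sigma}^0_1$; $\Phi$ is $\bp$ on $\bbo$ since $\{(s,y) : y \in \Phi(A_s)\} = A$ is Borel (hence $\bp$) for Borel $A$; and $\mc{F}^{\Phi}$, the family of nonempty Borel subsets of $\oom$, is nicely $\bs$-hard by the tautology that any $A \in \bs(\oom)$ can be written as $\proj_0(D)$ for some closed $D \subset \oom \times \oom$ (take $B = D$, and the same $D$ as witness in Definition \ref{d:nicely}). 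Applying Theorem \ref{t:main2} thus yields a Borel set $\tilde{B} \subset \oom \times \oom \times \oom$ such that the set $\mathcal{H} = \{s \in \oom : \tilde{B}_s \text{ admits a full Borel uniformization}\}$ is $\mathbf{\Sigma}^1_2$-hard.

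The next step is to pass from Borel sections to closed ones (equivalently, to trees). Write $\tilde{B} = \proj_{0,1,2}(C)$ for some closed $C \subset (\oom)^4$, so that $\tilde{B}_s = \proj_{0,1}(C_s)$ with $C_s \subset (\oom)^3 \cong \oom \times \oom$ closed. The key claim is: $\tilde{B}_s$ admits a full Borel uniformization if and only if $C_s$ does (viewed as a subset of $\oom \times (\oom \times \oom)$, uniformized by a Borel function $\oom \to \oom \times \oom$). The $(\Leftarrow)$ direction is immediate by composition with the projection onto the first coordinate. For $(\Rightarrow)$, given a Borel $f : \oom \to \oom$ with $(x,f(x)) \in \tilde{B}_s$ for every $x$, the section $\{z : (x,f(x),z) \in C_s\}$ is a nonempty $\Pi^0_1$-in-parameters set; using Fact \ref{f:effective}\eqref{f:basis} with a parameter $p$ satisfying $\omega^{x,\text{params}}_1 < \omega^{x,\text{params},p}_1$, it contains a $\Delta^1_1$-in-parameters element, and a Gandy-basis style selection produces a Borel $h$ with $(x, f(x), h(x)) \in C_s$. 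Setting $g(x) = (f(x), h(x))$ gives the desired uniformization of $C_s$, with the crucial point that $p$ can be chosen uniformly in $s$ by absorbing a Borel code of $\tilde{B}$ and $C$, exactly as in the proof of Theorem \ref{t:main2}.

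Finally, identifying closed subsets of $\oom \times \oom$ with trees on $\N \times \N$ via the Borel map $C \mapsto T_C$ (with $[T_C] = C$), the above yields a Borel reduction demonstrating that $\{T : [T] \text{ admits a full Borel uniformization}\}$ is $\mathbf{\Sigma}^1_2$-hard. The matching $\mathbf{\Sigma}^1_2$ upper bound is immediate from Fact \ref{f:effective}\eqref{f:kleene}(b): $[T]$ admits a full Borel uniformization iff there exists $q \in \oom$ such that for every $x \in \oom$ one has $\Delta^1_1(x, q) \cap [T]_x \neq \emptyset$, a condition that is $\bp$ in $(T, q)$ by Fact \ref{f:effective}\eqref{f:kleene}(a) and hence $\mathbf{\Sigma}^1_2$ in $T$. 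The main technical obstacle is the effective selection argument of the middle paragraph: one must choose the auxiliary parameter $p$ uniformly across all $s$ so that the resulting selector $h$ varies in a Borel-measurable way, mirroring the use of $q_0$ in the proof of Theorem \ref{t:main2}.
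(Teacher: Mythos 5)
Your application of Theorem~\ref{t:main2} with $\Phi=\mathrm{id}$ is fine, and your choice $\bbg=\bbo$ does satisfy all of that theorem's hypotheses (the paper uses $\bbg=\mathbf{\Pi}^0_2$, but this difference is immaterial). The upper-bound paragraph via Fact~\ref{f:effective}\eqref{f:kleene} is also correct. The problem is the middle step.

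You write $\tilde B=\proj_{0,1,2}(C)$ for \emph{some} closed $C\subset(\oom)^4$, with no injectivity assumption, and claim that $\tilde B_s$ admits a full Borel uniformization iff $C_s$ does. This is false as stated: take $\tilde B_s=\oom\times\oom$, so $\tilde B_s$ trivially has a Borel uniformization; but $C_s$ can then be any closed subset of $(\oom)^3$ projecting onto $\oom\times\oom$, and by the very result you are proving (Adams--Kechris) there are such closed sets with no Borel uniformization. Your proposed repair via an effective basis argument does not close this gap. You want a single parameter $p$ with $\omega^{x,\mathrm{params}}_1<\omega^{x,\mathrm{params},p}_1$ uniformly in $x$, but $\sup_{x\in\oom}\omega^{x}_1=\omega_1$, so no such $p$ exists; and in fact a nonempty $\Pi^0_1(z)$ subset of $\oom$ need not contain any $\Delta^1_1(z)$ element, so there is no pointwise selector either. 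The role of $q_0$ in the proof of Theorem~\ref{t:main2} is quite different: it absorbs codes for fixed objects ($\Phi(B')$, $D'$), not for the ranging real $x$, and the uniform $\Delta^1_1$-selection there is possible only because of the carefully arranged analytic witness set $D'$ obtained from the niceness hypothesis --- exactly the structure that is absent here.

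The fix is a single word: take $C$ so that $\tilde B$ is the \emph{injective} projection of $C$. For $\bbg=\mathbf{\Pi}^0_2$ this is elementary, which is presumably why the paper works with that class; for general Borel $\tilde B$ it is the Lusin--Suslin theorem, so your choice $\bbg=\bbo$ still works. With injectivity, given a Borel uniformization $f$ of $\tilde B_s$, the set $\{(x,z):(x,f(x),z)\in C_s\}$ is Borel with singleton sections, hence the graph of a Borel function $h$, and $x\mapsto\langle f(x),h(x)\rangle$ uniformizes $C_s$; the converse is projection. No effective machinery is needed at this stage.
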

	\begin{proof}
	
	    A standard calculation shows that the set in question is $\mathbf{\Sigma}^1_2$, so to show completeness, it is enough to verify that it is $\mathbf{\Sigma}^1_2$-hard. One can check that Theorem \ref{t:main2} can be applied to $X=Y=\oom$, $\bbg=\mathbf{\Pi}^0_2$ and $\Phi(A)=A$. This yields a set $B \in \mathbf{\Pi}^0_2(\oom \times \oom \times \oom)$ so that the set $\{s\in \oom: B_s \in \mc{U}^\Phi\}$ is $\mathbf{\Sigma}^1_2$-hard. Now, one can pick a set $C^0 \in \mathbf{\Pi}^0_1((\oom)^4)$ such that $B$ is the injective projection of $C^0$ to the first three coordinates (see, \cite[1G.5]{moschovakis2009descriptive}). Applying a recursive bijection between $(\N^\N)^2$ and $\N^\N$ to the last two coordinates, one obtains a set $C \in \mathbf{\Pi}^0_1((\oom)^3)$. To every $s \in \oom$ one can continuously assign a tree $T_s$ on $\N \times \N$ so that $[T_s]=C_s$. It is not hard to see that $[T_s]$ has a full Borel uniformization $\iff$ $B_s$ has a full Borel uniformization $\iff$ $B_s \in \mathcal{U}^\Phi$, which shows our claim.
	\end{proof}

	\begin{remark}
		\label{r:niceness} It has been mentioned earlier that the assumption of niceness cannot be dropped from our theorem. On the other hand, assuming $\mathbf{\Sigma}^1_2$-determinacy, it can be, in fact Theorem \ref{t:main1} has a particularly nice form.
		\begin{enumerate}
			\item ($V=L$) There exists a map $\Phi:\mathbf{\Pi}^0_2(\oom) \to \bp(\oom)$ that is $\mathbf{\Pi}^1_1$ on $\mathbf{\Pi}^0_2$, the family $\mc{F}^{\Phi}$ is 
			$\bs$-hard on $\mathbf{\Pi}^0_2$, but $\mc{U}^{\Phi}$ is not $\mathbf{\Sigma}^1_2$-hard.
			\item ($\mathbf{\Sigma}^1_2$-determinacy) Let $X$, $Y$, and $\bbg$ be as in Theorem \ref{t:main1} and $\Phi: \bbg(X) \to \bp(Y)$ be a map that is $\bp$ on $\bbg$. Then if $\mc{F}^\Phi$ is not $\bp$ on $\bbg$ (that is, there exists a set $B \in \bbg(\oom \times X)$ such that $\{s:B_s \in \mc{F}^\Phi\} \not \in \bp$) then $\mc{U}^\Phi$ is $\mathbf{\Sigma}^1_2$-hard.
			
		\end{enumerate}
		$(2)$ can be shown using similar ideas to the ones used in the proof above utilizing Wadge's lemma for higher projective classes, while in $(1)$ one can construct a $\Phi$ so that $\Phi(A)$ can only be nonempty if $|A|\leq \aleph_0$ and moreover every element of $\Phi(A)$ lies very high in the constructible hierarchy. This way it can be assured that for a $B \subset \mathbf{\Pi}^0_2((\oom)^2)$ the set $\Phi(B)$ can be uniformized only if $\proj_1(B)$ is countable. The question whether $\mathbf{\Sigma}^1_2$-determinacy is optimal will be investigated in an upcoming paper.

	\end{remark}

	\section{Consequences on graph colorings}
	\label{s:appl1}
	In this section we apply the results of the previous one and prove Theorem \ref{t:mainintro}. We start with proving a variant of it, from which the full version will be easy to deduce.
	
	\begin{theorem}
		\label{t:main1}
		
		There exists a closed set $C \subset \oom \times \omm$ such that the set $\{s \in \oom:\chi_B(\mc{G}_\mc{S}|_{C_s})<\aleph_0\}$ is $\mathbf{\Sigma}^1_2$-complete.
	\end{theorem}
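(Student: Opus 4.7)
The strategy is to apply Theorem~\ref{t:main2} with $X=Y=\omm$, $\bbg=\mathbf{\Pi}^0_1$, and $\Phi(A)$ a Polish-space parametrization of the Borel $3$-colorings of $\mc{G}_\mc{S}|_A$, so that $\mc{F}^\Phi=\{A\in\mathbf{\Pi}^0_1(\omm):\chi_B(\mc{G}_\mc{S}|_A)\leq 3\}$. This pairs well with the gap theorem announced in Theorem~\ref{t:mainintro}: for closed $C\subset\omm$, $\chi_B(\mc{G}_\mc{S}|_C)<\infty$ is equivalent to $\chi_B(\mc{G}_\mc{S}|_C)\leq 3$. That $\Phi$ is $\mathbf{\Pi}^1_1$ on $\mathbf{\Pi}^0_1$ follows by unfolding the Borel coding of Fact~\ref{f:prel}: validity of a code as a proper $3$-coloring of $\mc{G}_\mc{S}|_A$ is $\Pi^1_1$ uniformly in the pair $(c,A)$.

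For the nicely $\bs$-hard hypothesis on $\mc{F}^\Phi$, given $A\in\bs(\oom)$ with $A=\proj_0(F)$ for a closed $F\subset\oom\times\oom$, I would construct a continuous family $s\mapsto B_s$ of closed subsets of $\omm$ with: (i) if $s\in A$, then $B_s$ is non-dominating, with a witness continuously recovered from every branch $\eta\in F_s$; (ii) if $s\notin A$, then $B_s$ contains a full cube $[y]^{\mathbb{N}}$, forcing $\chi_B(\mc{G}_\mc{S}|_{B_s})=\aleph_0$ by the Galvin--Prikry theorem. The recollection in the introduction that a non-dominating Borel set admits an explicit Borel $3$-coloring of the shift graph built from the witness then supplies a $\bs$ set $D\subset\Phi(B)$ satisfying $D_s\neq\emptyset\iff s\in A\iff\Phi(B_s)\neq\emptyset$. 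Theorem~\ref{t:main2} therefore yields $B\in\mathbf{\Pi}^0_1(\oom\times\oom\times\omm)$ such that $\{s:B_s\in\mc{U}^\Phi\}$ is $\mathbf{\Sigma}^1_2$-hard; here $B_s\in\mc{U}^\Phi$ means the sections $(B_s)_t$ admit a Borel-uniform choice of $3$-colorings.

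To convert this into a single closed set I would construct a continuous injection $\psi\colon\oom\times\omm\to\omm$ with closed image, shift-equivariant in its second argument, and sending distinct first arguments to disjoint tail classes of $\omm$, hence to distinct connected components of $\mc{G}_\mc{S}$. Setting $C_s:=\psi(B_s)$ produces a closed subset of $\omm$ whose restricted shift graph is, up to Borel isomorphism, the Borel disjoint union of the $\mc{G}_\mc{S}|_{(B_s)_t}$; so Borel-uniform $3$-colorings of the sections assemble into a Borel $3$-coloring of $C_s$ and conversely. Combined with the gap theorem this gives $\chi_B(\mc{G}_\mc{S}|_{C_s})<\infty\iff B_s\in\mc{U}^\Phi$, transferring $\mathbf{\Sigma}^1_2$-hardness. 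The main obstacle will be the construction of $\psi$: reconciling shift-equivariance in $x$ with a Borel tagging by $t$ that survives into the infinite tail of $\psi(t,x)$, while keeping the image closed, requires an explicit persistent coding (for instance, distributing the bits of $t$ across the gaps of $\psi(t,x)$ in a shift-stable way). The matching $\mathbf{\Sigma}^1_2$ upper bound is routine: existence of a finite Borel coloring of $\mc{G}_\mc{S}|_{C_s}$ is an existential statement over $\mathbb{N}$ and over the $\Pi^1_1$ set $\mb{BC}(\omm)$ of Borel codes, with a $\Pi^1_1$ validity predicate, and is therefore $\mathbf{\Sigma}^1_2$. Together these establish $\mathbf{\Sigma}^1_2$-completeness.
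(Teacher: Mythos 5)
Your high-level plan is the right one and matches the paper's in spirit: apply Theorem~\ref{t:main2} with $\Phi$ encoding Borel $3$-colorings of the shift graph, use the non-dominating/cube dichotomy to establish nice $\bs$-hardness, embed the resulting parametrized product structure into $\mc{G}_\mc{S}$, and invoke the gap between $\chi_B<\infty$ and $\chi_B\leq 3$. However, there is a genuine gap around obtaining a \emph{closed} set $C$, and it surfaces in two ways.

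First, the choice $\bbg=\mathbf{\Pi}^0_1$ does not satisfy the hypotheses of Theorem~\ref{t:main2}: the theorem explicitly requires $\mathbf{\Pi}^0_1\cup\mathbf{\Sigma}^0_1\subset\bbg$ (the diagonalization in Lemma~\ref{l:main0} uses a closed set and an open set in a case split, so both are needed), and this is not the case for $\mathbf{\Pi}^0_1$. Moreover, the natural witnessing family for nice $\bs$-hardness, namely $B_s=\{y:(\forall x\leq^* y)(x\notin F_s)\}$, is $\mathbf{\Pi}^0_2$ and not closed, because of the $\leq^*$ quantifier; the paper's Lemma~\ref{l:forH} accordingly uses $\bbg=\mathbf{\Pi}^0_2$. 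Producing a genuinely closed family of non-dominating sections with this $\bs$-hardness (alluded to in the paper's remark) is a stronger fact and not what the existing machinery gives.

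Second, and as a consequence, your map $\psi$ cannot do the job you assign it. A continuous injection with closed image and shift-equivariant second coordinate (which is exactly the paper's $e$ from Lemma~\ref{l:emb}, via an almost disjoint family) transports the graph $\mc{H}$ into $\mc{G}_\mc{S}$, but the image $\psi(B_s)$ is only closed if $B_s$ already is; since $B_s$ is $\mathbf{\Pi}^0_2$, this fails. The actual passage from a Borel subset of $\omm$ to a closed one, preserving Borel chromatic number $\geq 3$, is precisely the content of the paper's Lemma~\ref{l:lift} (which also proves Theorem~\ref{t:mainintro2}): one replaces $B_s$ by its shift-closure, realizes the graph of a partial Borel function as a closed set, and then performs a delicate shift-equivariant re-encoding in which each coordinate carries longer and longer initial segments of the coding data, so that the image becomes topologically closed. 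Your final paragraph correctly flags that the construction of $\psi$ is the obstacle, but it mislocates the difficulty as tagging by $t$ rather than as converting a $\mathbf{\Pi}^0_2$ (or Borel) subset to a closed one, and offers no argument for this. As written, the proposal establishes $\mathbf{\Sigma}^1_2$-hardness for a Borel (indeed $\mathbf{\Pi}^0_2$) family of subgraphs, not for a closed one.
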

	
	The next lemma reduces our task to produce a Borel set $B \subset \oom \times \omm$ such that the set $\{s:\chi_B(\mc{G}_\mc{S}|_{B_s})<\aleph_0\}$ is $\mathbf{\Sigma}^1_2$-complete.

	\begin{lemma}
		\label{l:lift}
		Let $B \subset \oom \times \omm$ be a $\mathbf{\Delta}^1_1$ set. There exists a $\mb{\Pi}^0_1$ set $C \subset \oom \times \omm$ and an injective, vertical section preserving (that is, for every $s,x$ we have $\proj_0\Psi(s,x)=s$) continuous map $\Psi:C\to B$ such that for every $s\in \oom$ we have that $\Psi_s,\Psi^{-1}_s$ are shift-invariant maps and if $\chi_B(\mc{G}_\mc{S}|_{B_s}) \geq 3$ then $\chi_B(\mc{G}_\mc{S}|_{B_s})=\chi_B(\mc{G}_\mc{S}|_{C_s})$. Moreover, $\Psi_s$ is a bijection if $B_s$ is closed under the shift.

	\end{lemma}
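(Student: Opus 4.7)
The plan is to realize the Borel set $B$ as a shift-respecting continuous injective image of a closed set, uniformly in the parameter $s$. I would proceed in three phases.

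\textbf{Phase 1 (Borel-to-closed reduction).} By the classical fact that every $\mathbf{\Delta}^1_1$ set is a continuous injective image of a closed subset of Baire space (applied parametrically via a Borel code), I would produce a closed set $F \subset \oom \times \omm \times \oom$ such that the projection $(s,y,w) \mapsto (s,y)$ restricted to $F$ is a continuous bijection onto $B$. The extra $\oom$-coordinate plays the role of a Borel witness of membership in $B_s$.

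\textbf{Phase 2 (shift-equivariant encoding).} This is the crux. I would construct a continuous injection $\iota : \omm \times \oom \to \omm$ with closed image, carefully designed so that the shift $\mc{S}$ on $\iota(y,w)$ corresponds to a controlled action on $(y,w)$ whose $y$-projection behaves like $\mc{S}$ on $\omm$. A naive coordinate-wise encoding $\iota(y,w)_n = g(y_n,w_n)$ is impossible because strict monotonicity forces $g$ to be essentially independent of $w$. Instead, my plan is to use the \emph{gap sequence} $\delta(y)_n = y_{n+1}-y_n-1$, which transforms equivariantly under the shift ($\delta(\mc{S}(y))=\sigma(\delta(y))$), and to interleave it with $w$ to produce the gap sequence of $\iota(y,w)$, keeping $y_0$ as the first term. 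Shifts of $\iota(y,w)$ then alternate between advancing the $w$-coordinate and advancing the $y$-coordinate, so edges of $\mc{G}_\mc{S}$ in the image either correspond directly to $y$-edges or to "internal" steps that can be identified and handled.

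\textbf{Phase 3 (defining $C$, $\Psi$, and properties).} Set $C = \{(s,z) : z \in \iota(F_s)\}$; this is closed in $\oom \times \omm$ since $\iota$ is a homeomorphism onto a closed subset of $\omm$ and $F$ is closed. Define $\Psi(s,z) = (s, y)$ where $(y,w)$ is the unique preimage under $\iota$ with $(s,y,w) \in F$. Continuity, injectivity, and vertical-section preservation are immediate. Shift-invariance of $\Psi_s$ and $\Psi_s^{-1}$ follows from the equivariance built into Phase 2: an edge $\{z,\mc{S}(z)\}$ in $C_s$ is mapped to an edge $\{\Psi_s(z), \mc{S}(\Psi_s(z))\}$ in $B_s$, and conversely. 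When $B_s$ is shift-invariant, the encoding can be adjusted (or the "internal" shift steps absorbed) so that $\Psi_s$ is a bijection.

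\textbf{Chromatic number equality.} The inequality $\chi_B(\mc{G}_\mc{S}|_{C_s}) \leq \chi_B(\mc{G}_\mc{S}|_{B_s})$ is immediate from the shift-respecting injection $\Psi_s$. For the reverse inequality, given a Borel $k$-coloring of $C_s$ with $k \geq 3$, transfer it via $\Psi_s^{-1}$ to $\Psi_s(C_s) \subset B_s$, then extend to $B_s \setminus \Psi_s(C_s)$. The construction should arrange that this difference meets each $\mc{S}$-orbit in a controlled (say, "forward-transient") way, so that its restricted shift graph has Borel chromatic number at most $2$. Extending the coloring then requires at most one additional color, which the hypothesis $k\ge 3$ allows us to re-use from the existing palette by local recoloring along neighbors of the transient set.

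\textbf{Main obstacle.} Phase 2 is the essential technical difficulty: finding an encoding $\iota$ that is simultaneously (a) injective with closed image, (b) continuous, and (c) sufficiently shift-equivariant that edges of $\mc{G}_\mc{S}$ on the image correspond under $\Psi_s$ to edges of $\mc{G}_\mc{S}$ on $B_s$. The dimensional mismatch between $\omm\times\oom$ and $\omm$ rules out any fully equivariant map, so the construction must carefully identify and quotient out the "extra" shift steps introduced by the $\oom$-factor. Ensuring that this identification is compatible with the chromatic number argument in Phase 3 is the main work.
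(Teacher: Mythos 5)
Your three-phase plan correctly identifies the outline (realize $B$ as an injective image of a closed set, do a shift-respecting encoding, then transfer colorings), and your Phase 3 intuition about coloring a "transient" part with two colors is essentially what the paper does. But Phase 2, which you rightly flag as the crux, is where the plan breaks, and the specific interleaving idea you propose does not work.

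The gap: if $\iota(y,w)$ is built by alternating the gap sequence of $y$ with coordinates of $w$, then $\mathcal{S}(\iota(y,w))$ is a "halfway" point that is not of the form $\iota(y',w')$ for any pair with $(s,y',w')\in F$. So with $C_s=\iota(F_s)$, an element $z\in C_s$ generically has $\mathcal{S}(z)\notin C_s$, and the graph $\mathcal{G}_\mathcal{S}|_{C_s}$ has no edges at all; the chromatic-number equality then fails for every $B_s$ with $\chi_B\geq 2$. Conversely, if you close $C_s$ under $\mathcal{S}$ to recover the edges, the halfway points have no preimage in $B_s$, so $\Psi_s$ is no longer a bijection onto its image with the required section-preservation, and the shift-invariance of $\Psi_s^{-1}$ is lost. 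Your stated obstacle, that a "fully equivariant map" is impossible due to the dimensional mismatch, is actually the wrong conclusion: a fully shift-equivariant encoding \emph{is} achievable, and the paper builds one. Two ideas that you are missing make this possible. First, one must \emph{pass to the shift-hereditary part} $B'_s=\{x:\forall j\,\mathcal{S}^j(x)\in B_s\}$ before encoding; only on this shift-invariant set does shift-equivariance even make sense, and the reduction $\chi_B(\mathcal{G}_\mathcal{S}|_{B_s})=\chi_B(\mathcal{G}_\mathcal{S}|_{B'_s})$ (for $\chi_B\geq 3$) is handled separately by a $2$-coloring of orbits that eventually leave $B'_s$. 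Second, rather than interleaving the witness into a longer sequence, the paper defines $\overline{\Psi}(s,x)=(\overline{\Psi}^1(s,\mathcal{S}^j(x)))_{j\in\mathbb{N}}$, where each natural number $\overline{\Psi}^1(s,x)$ encodes $x(0)$ together with \emph{increasingly long initial segments} of a Borel witness function $\overline{\psi}$ evaluated at $x$ and at its shift-predecessors (the increasing length is governed by $x(0)$, and a factor $5^{\max\{\ldots\}}$ makes the output strictly increasing). This way $\overline{\Psi}_s$ literally commutes with $\mathcal{S}$, the image is closed because the partial witness data coheres in the limit, and $\overline{\Psi}$ is invertible because $x(0)$ is recoverable as the exponent of $2$. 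Unless you replace your interleaving by some analogue of this "spread the witness along the orbit with increasing accuracy" device, the construction of $C$ collapses.
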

	
	Note that Theorem \ref{t:mainintro2} also follows from the above lemma: the first statement of it is obvious if $\chi(\mc{G}_\mc{S}|_{B}) \leq 2$, while (a parametrized version of) the rest is Lemma \ref{l:lift}.
	
	\begin{proof}[Proof of Lemma \ref{l:lift}]
		The idea of the proof is that we express $B$ as an injective projection of a closed set. Then, by applying a homeomorphism (that serves as a coding map) to this closed set we will get another closed set so that the composition of the inverse of the projection and the homeomorphism, and also the inverse of this composition are shift-invariant.
		
		Consider the set $B'=\{(s,x): (\forall j \in \N)(\mc{S}^{j}(x) \in B_s)\}$. We will build a set $C$ and a map $\Psi$ as in the lemma, such that $\Psi$ is a bijection between $C$ and $B'$.
		
		Let $(\sigma_i)_{i \in \om}$ be an enumeration of the finite increasing sequences of natural numbers and define $\pred:B' \to [\N]^{<\om}$ by $\pred(s,x)=\{i:(s,\sigma_i \concatt x) \in B'\}$. Since $\pred$ is a Borel map, its graph can be expressed as an injective projection of a closed set in $\mathbf{\Pi}^0_1(\oom \times \omm \times [\N]^{<\om} \times \oom)$ (here $ [\N]^{<\om}$ is endowed with the discrete topology). Let $\overline{\psi}$ be the partial Borel map $\oom \times \omm \times [\N]^{<\om}\to \oom$ so that the graph of $\ovl{\psi}$ is this closed set (see, \cite[1G.5]{moschovakis2009descriptive}).
		
		Given a pair $(s,x) \in B'$ there are finitely many $i$'s with $i \in \pred(s,x)$ and so the set $\{\ovl{\psi}(s,\sigma_i \concatt x,\pred(s,\sigma_i \concatt x)):i \in \pred(s,x)\}$ is finite. Our strategy is to every $(s,x)$ assign a natural number that encodes finite initial segments of the elements of the finite set above. The assigned number to $(s,x)$ should be smaller than the one assigned to $(s,\mc{S}(x))$ and the latter should encode longer initial segments of the corresponding finite set of values (this length will be determined by the first element of $\mc{S}(x)$). We construct a map $\ovl{\Psi}:\oom \times [\om]^\om \to \oom \times [\om]^\om$ by calculating the assigned number to $(s,x), (s,\mc{S}(x)), \dots$, hence guaranteeing the shift-invariance of $\ovl{\Psi}_s$. Finally, we will let $C=\overline{\Psi}(B')$ and $\Psi=\overline{\Psi}^{-1}$, the encoding of longer and longer initial segments will guarantee that the set $C$ is closed.

		More precisely, fix an injection $cd: [\N \times [\om]^{<\om} \times\om^{<\om}]^{<\om} \to \om$ (the set $[\N \times [\om]^{<\om} \times\om^{<\om}]^{<\om}$ is also endowed with the discrete topology). Define $\overline{\Psi}^0(s,x)=$ \[\Big \{\Big(i,\pred(s,\sigma_i \concatt x),\overline{\psi}\big(s,\sigma_i \concatt x,\pred(s,\sigma_i \concatt x)\big)|_{x(0)}\Big): i \in \pred(s,x)\Big\}.\]       
		
		Let $A_n=\{(s,x):(s,x) \in B', x(0)=n\}.$
		In order to achieve the property that the natural numbers serving as codes increase as one applies the map $\mc{S}$, we define a map $\ovl{\Psi}^1$ on the sets $A_n$ inductively. If $\ovl{\Psi}^1$ has been already defined on $\bigcup_{i<n} A_i$ and $(s,x) \in A_{n}$ let $\ovl{\Psi}^1(s,x)=$
		\[2^{x(0)}\cdot 3^{cd(\ovl{\Psi}^0(s,x))}\cdot5^{\max \{0,\ovl{\Psi}^1(s,\sigma_i \concatt x): i \in \pred(s,x), \sigma_i \not = \emptyset\}} . \]
		
		Finally, let \[\ovl{\Psi}(s,x)=(s,(\ovl{\Psi}^1(s,\mc{S}^{j}(x)))_{j\in \mathbb{N}}).\] Note that for each $s$ the map $\ovl{\Psi}(s,\cdot)$ is a Borel map from $B'_{s}$ to $\omm$: by definition $B'_s$ is closed under the shift, so $\ovl{\Psi}(s,\cdot)$ is defined on $B'_s$ and from the definition of $\ovl{\Psi}^1$ it follows that for any $x$ we have \[\ovl{\Psi}^1(s,x)<5^{\ovl{\Psi}^1(s,x)} \leq 5^{\max \{\ovl{\Psi}^1(s,\sigma_i \concatt \mc{S}(x)): i \in \pred(s,\mc{S}(x)), \sigma_i \not = \emptyset\}} \leq \ovl{\Psi}^1(s,\mc{S}(x)).\]

		Observe that an induction on $n$ yields that if $(s,x) \in A_n$ then $\ovl{\Psi}^1(s,x)$ is determined by the values $\{(i,\pred(s,\sigma_i \concatt x),\ovl{\psi}(s,\sigma_i \concatt x,\pred(s,\sigma_i \concatt x))|_m):m \leq x(0), i \in \pred(s,x)\}$. In particular, for a given $k \in \N$ the $k$th coordinate of $\ovl{\Psi}(s,x)$ is determined by $s(k)$ and the set \[
		\begin{split}\{(i,\pred(s,\sigma_i \concatt \mc{S}^k(x)),\ovl{\psi}(s,\sigma_i \concatt \mc{S}^k(x),\pred(s,\sigma_i \concatt \mc{S}^k(x)))|_m):\\i \in \pred(s,\mc{S}^k(x)),m \leq \mc{S}^k(x)(0)\}.
		\end{split}\]

		\begin{claim}
			Suppose that $((s_n,x_n))_{n \in \om}$ is a sequence with elements in $B'$ such that the sequence $(\ovl{\Psi}(s_n,x_n))_{n \in \om}$ is convergent and $i,j \in \N$. Then the sequence $\Big( \big(s_n,\sigma_i \concatt \mc{S}^j(x_n),\pred(s_n,\sigma_i \concatt \mc{S}^j(x_n)),\ovl{\psi}(s_n,\sigma_i \concatt \mc{S}^j(x_n),\pred(s_n,\sigma_i \concatt \mc{S}^j(x_n))\big)\Big)_{n}$ is also convergent in the sense that either for every large enough $n$ we have $i \not \in \pred(s_n, \mc{S}^j(x_n))$, in which case the sequence is eventually not defined, or for every large enough $n$ we have $i \in \pred(s_n, \mc{S}^j(x_n))$ and then it converges.
			\label{c:conv}
		\end{claim}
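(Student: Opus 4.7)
The overall plan is to exploit the unique prime factorization in the definition of $\ovl{\Psi}^1$ to recover, from the single hypothesis that $\ovl{\Psi}(s_n,x_n)$ converges, all the data needed for convergence of the more elaborate sequence. Since $\ovl{\Psi}^1$ takes values in $\N$, a convergent sequence of its values is eventually constant. Hence convergence of $\ovl{\Psi}(s_n,x_n)=(s_n,(\ovl{\Psi}^1(s_n,\mc{S}^k(x_n)))_{k \in \N})$ forces $s_n \to s$ and, for each $k$, the natural number $\ovl{\Psi}^1(s_n,\mc{S}^k(x_n))$ to be eventually a fixed value. Reading off the exponents of $2$ and $3$ in the definition of $\ovl{\Psi}^1$ then yields that $x_n(k)=\mc{S}^k(x_n)(0)$ is eventually a fixed $m_k$ (so $x_n$ converges pointwise to some $x=(m_0,m_1,\ldots) \in \omm$), and that $\ovl{\Psi}^0(s_n,\mc{S}^k(x_n))$ is eventually a fixed finite set $T_k$.

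Now fix $i,j$. First I would dispose of the trivial obstructions. If $\sigma_i \neq \emptyset$ with $\max \sigma_i \geq m_j$, then for all large $n$ the concatenation $\sigma_i \concatt \mc{S}^j(x_n)$ fails to be an element of $\omm$, so $i \notin \pred(s_n,\mc{S}^j(x_n))$ and the sequence is eventually undefined. Otherwise $\sigma_i \concatt \mc{S}^j(x_n)$ is eventually defined and converges pointwise to $\sigma_i \concatt \mc{S}^j(x)$, while the condition ``$i \in \pred(s_n,\mc{S}^j(x_n))$'' is equivalent, for large $n$, to $i$ appearing as the first coordinate of some triple in the fixed set $T_j$, and is therefore eventually constant. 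If it eventually fails we are done; otherwise the unique triple from $T_j$ starting with $i$ is fixed, which already pins down $\pred(s_n, \sigma_i \concatt \mc{S}^j(x_n))$ eventually and fixes the first $m_j$ coordinates of $\ovl{\psi}(s_n, \sigma_i \concatt \mc{S}^j(x_n),\pred(s_n,\sigma_i \concatt \mc{S}^j(x_n)))$.

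The main obstacle is to extract arbitrarily long convergent initial segments of $\ovl{\psi}(s_n,\sigma_i \concatt \mc{S}^j(x_n),\pred(\ldots))$, since from $T_j$ alone we only control the first $m_j$ coordinates. The trick is to re-encode the same concatenation deeper in the sequence: for any $k>j$, set $\sigma_{i'}:=\sigma_i \concatt (m_j,m_{j+1},\ldots,m_{k-1})$. This is a fixed (independent of $n$) finite increasing sequence, because $\max \sigma_i < m_j < m_{j+1}<\cdots$, so it has a well-defined index $i'$. By the pointwise convergence $x_n \to x$, we have $\sigma_{i'} \concatt \mc{S}^k(x_n) = \sigma_i \concatt \mc{S}^j(x_n)$ for all sufficiently large $n$; hence $i' \in \pred(s_n,\mc{S}^k(x_n))$, and the triple for $i'$ inside the stabilized set $T_k$ supplies the initial segment of length $m_k=\mc{S}^k(x_n)(0)$ of $\ovl{\psi}(s_n,\sigma_i \concatt \mc{S}^j(x_n),\pred(\ldots))$. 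Since $(m_k)_{k \in \N}$ enumerates an element of $\omm$ and is therefore unbounded, letting $k \to \infty$ gives convergence of $\ovl{\psi}$ in $\oom$, finishing the proof.
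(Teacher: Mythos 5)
Your proof is correct and takes essentially the same route as the paper's: convergence of $\ovl{\Psi}(s_n,x_n)$ forces each $\ovl{\Psi}^1(s_n,\mc{S}^k(x_n))$ to be eventually constant, you read off prime exponents to stabilize $x_n(k)$ and $\ovl{\Psi}^0(s_n,\mc{S}^k(x_n))$, and then use the re-indexing $\sigma_i\concatt\mc{S}^j(x_n)=\sigma_{i'}\concatt\mc{S}^k(x_n)$ (for large $k$) to extract arbitrarily long convergent initial segments of $\ovl{\psi}$ from the stabilized sets. The only cosmetic differences are the explicit names $m_k$, $T_k$ for the stabilized values and a slightly different initial case split; the core argument is identical.
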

		
		\begin{proof}
			Clearly, the convergence of the sequence $(\ovl{\Psi}(s_n,x_n))_{n \in \N}$ implies the convergence of $(\ovl{\Psi}^0(s_n,\mc{S}^{j}(x_n)))_{n \in \N}$ and this yields that $\pred(s_n,\mc{S}^j(x_n))$ must stabilize to some set $I$ as $n \to \infty$. If $i \not \in I$ then for every large enough $n$ we have $(s_n,\sigma_i \concatt \mc{S}^j(x_n)) \not \in B'$, hence the sequence is not defined.
			
			Suppose now that $i \in I$, we check the convergence of the required quadruple. The sequence $(s_n)_{n \in \N}$ clearly converges, while the convergence of $(x_n)_{n \in \N}$ follows from the definition of $\ovl{\Psi}^1$ and the fact that for any $j' \in \N$ the sequence $(\ovl{\Psi}^1(s_n,\mc{S}^{j'}(x_n)))_{n \in \N}$ is convergent. The convergence of the third coordinate is implied by the convergence of the sequence $(\ovl{\Psi}^0(s_n,\mc{S}^j(x_n)))_{n \in \N}$. 
			
			Finally, to show the same for the fourth sequence pick an arbitrary $k \in \N$. We check that the values $\ovl{\psi}(s_n,\sigma_i \concatt \mc{S}^j(x_n),\pred(s_n,\sigma_i \concatt \mc{S}^j(x_n)))(k)$ stabilize. Let $j'\geq\max\{k,j\}$. By the convergence of the sequence $(x_n)_n$ we can pick an $i' \in \N$ such that $\sigma_i \concatt \mc{S}^j(x_n)=\sigma_{i'} \concatt \mc{S}^{j'}(x_n)$ holds for every large enough $n$. Since $i \in I$, we have \[\Big(i',\pred(s,\sigma_{i} \concatt \mc{S}^{j}(x_n)),\overline{\psi}\big(s,\sigma_{i} \concatt \mc{S}^{j}(x_n),\pred(s,\sigma_{i} \concatt \mc{S}^{j}(x_n))\big)|_{\mc{S}^{j'}(x_n)(0)}\Big)=\]\[\Big(i',\pred(s,\sigma_{i'} \concatt \mc{S}^{j'}(x_n)),\overline{\psi}\big(s,\sigma_{i'} \concatt \mc{S}^{j'}(x_n),\pred(s,\sigma_{i'} \concatt \mc{S}^{j'}(x_n))\big)|_{\mc{S}^{j'}(x_n)(0)}\Big)\]\[ \in \ovl{\Psi}^0(s_n,\mc{S}^{j'}(x_n)), \] for every large enough $n$. But the sequence $(\ovl{\Psi}^0(s_n,\mc{S}^{j'}(x_n)))_{n \in \N}$ converges, so the values $\overline{\psi}(s,\sigma_{i} \concatt \mc{S}^{j}(x_n),\pred(s,\sigma_{i} \concatt \mc{S}^{j}(x_n)))|_{\mc{S}^{j'}(x_n)(0)}$ must stabilize as well, and then the fact $\mc{S}^{j'}(x_n)(0) \geq j' \geq k$ yields the convergence of the sequence $\big(\ovl{\psi}(s_n,\sigma_i \concatt \mc{S}^j(x_n),\pred(s_n,\sigma_i \concatt \mc{S}^j(x_n)))(k)\big)_{n \in \N}$.

		\end{proof}
		
		Now we check that $\ovl{\Psi}(B')$ is a closed set. In order to see this suppose that $((s_n,x_n))_{n \in \om}$ is a sequence in $B'$ so that $(\ovl{\Psi}(s_n,x_n))_{n \in \N}$ is convergent. By Claim \ref{c:conv} the sequence $(s_n,x_n, \pred(s_n,x_n),\ovl{\psi}(s_n,x_n, \pred(s_n,x_n)))_n$ is convergent, and by the fact that the graph of $\ovl{\psi}$ is closed, it converges to some $(s,x,\pred(s,x),\ovl{\psi}(s,x, \pred(s,x)))$. To see that $\ovl{\Psi}(s_n,x_n) \to \ovl{\Psi}(s,x)$ holds, pick an arbitrary $k \in \N$. We show that $\ovl{\Psi}(s_n,x_n)(k) \to \ovl{\Psi}(s,x)(k)$. Using the convergence of $(s_n,\mc{S}^j(x_n),\pred(s_n,\mc{S}^j(x_n)))_{n \in \N}$ for every $j \in \N$ (which follows from Claim \ref{c:conv}) we can assume that for each $n$ and $j \leq k$ we have $\mc{S}^j(x_n)(0)=\mc{S}^j(x)(0)$ and $\pred(s_n,\mc{S}^j(x_n))=\pred(s,\mc{S}^j(x))$. By Claim \ref{c:conv} for any $j \leq \mc{S}^k(x)(0)=\mc{S}^k(x_n)(0)$ and $i \in \pred(s,\mc{S}^j(x))=\pred(s,\mc{S}^j(x_n))$ the sequence $\Big(\big(s_n,\sigma_i \concatt \mc{S}^j(x_n),\pred(s_n,\sigma_i \concatt \mc{S}^j(x_n)),\ovl{\psi}(s_n,\sigma_i \concatt \mc{S}^j(x_n),\pred(s_n,\sigma_i \concatt \mc{S}^j(x_n))\big)\Big)_{n}$ is convergent and again by the closedness of the graph of $\ovl{\psi}$ its limit is necessarily $\big(s,\sigma_i \concatt \mc{S}^j(x),\pred(s,\sigma_i \concatt \mc{S}^j(x)),\ovl{\psi}(s,\sigma_i \concatt \mc{S}^j(x),\pred(s,\sigma_i \concatt \mc{S}^j(x)))\big)$. Hence, using the observation made before Claim \ref{c:conv} for a large enough $n$ all the values determining $\ovl{\Psi}(s_n,x_n)(k)$ and $\ovl{\Psi}(s,x)(k)$ will be the same. Thus $C=\ovl{\Psi}(B')$ is indeed closed.
		
		Note that for any $(s,y)=\ovl{\Psi}(s,x)$ and $j \in \N$ the exponent of $2$ in $y(j)$ is $x(j)$. Hence, $\ovl{\Psi}$ is invertible and $\ovl{\Psi}^{-1}(s,\mc{S}(y))=(s,\mc{S}(x))$. Thus, we obtain that $\Psi:=\ovl{\Psi}^{-1}$ is a continuous bijection, so that for each $s \in \oom$ the map $\Psi_s$ back-and-forth shift-invariant. This implies that $\chi_B(\mc{G}_{\mc{S}}|_{B'_s})=\chi_B(\mc{G}_{\mc{S}}|_{C_s})$.

		Finally, we turn back to the set $B$. Of course, if $B_s$ is closed under the shift then $B'_s=B_s$, hence the only thing left to check from the lemma is that whenever $\chi_B(\mc{G}_{\mc{S}}|_{B_s}) \geq 3$ then $\chi_B(\mc{G}_{\mc{S}}|_{B_s})=\chi_B(\mc{G}_{\mc{S}}|_{B'_s})$. Clearly, $\chi_B(\mc{G}_{\mc{S}}|_{B_s}) \geq \chi_B(\mc{G}_{\mc{S}}|_{B'_s})$. For a given $c_0:[\mathbb{N}]^\mathbb{N} \to n$ Borel $n$-coloring of $\mc{G}_{\mc{S}}|_{B'_{s}}$  define\\ $c(x)=$
		\[\begin{cases}
		\min \{m:\mc{S}^{m}(x) \not \in B_s\} \mod 2, & \text{if for all $k$ we have $\mc{S}^k(x) \not \in B'_s$,}\\
		c_0(\mc{S}^k(x))+k \mod n, & \text{otherwise, where $k$ is minimal with $\mc{S}^k(x) \in B'_s$}.
		\end{cases} \]
		
		It is not hard to check that $c$ is a Borel $\max\{2,n\}$-coloring of $\mc{G}_{\mc{S}}|_{B_s}$, hence $\chi_B(\mc{G}_{\mc{S}}|_{B_s}) \geq 3$ implies $\chi_B(\mc{G}_{\mc{S}}|_{B_s})=\chi_B(\mc{G}_{\mc{S}}|_{B'_s})$.

	\end{proof}

	Thus, in order to show Theorem \ref{t:main1} it is enough to construct the required Borel set. This will be done in two steps. Let $\mc{H}=(\oom \times \omm,E_\mc{H})$ where $(s,x)E_\mc{H}(s',x') \iff s=s'$ and $xE_\mc{S}x'$. First we will notice that $\mc{G}_\mc{S}$ contains an isomorphic copy of $\mc{H}$ and then using Theorem \ref{t:main2} we will show that the finitely chromatic Borel subsets of the graph $\mc{H}$ are already $\mathbf{\Sigma}^1_2$-hard.
	
	\begin{lemma}
		\label{l:emb} There exists a continuous injection $e:\mathbb{N}^\mathbb{N} \times [\mathbb{N}]^\mathbb{N} \to [\mathbb{N}]^\mathbb{N}$ that is an isomorphism between $\mc{H}$ and $\mc{G}_{\mc{S}}|_{\ran(e)}$.
	\end{lemma}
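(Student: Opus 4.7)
The plan is to construct $e$ explicitly by encoding the pair $(s,x)$ into the elements of $e(s,x)$ in such a way that the $s$-information is recoverable from arbitrarily late tails of $e(s,x)$. First I fix an injection $\mathrm{code}\colon\om^{<\om}\to\om$ that is monotone under extension, e.g.\ $\mathrm{code}(\sigma)=\prod_{i<|\sigma|}p_i^{\sigma(i)+1}$ with $p_i$ the $i$-th prime; this makes $\mathrm{code}(\sigma)\leq\mathrm{code}(\tau)$ whenever $\sigma\sqsubseteq\tau$. Then I define
\[e(s,x)=\bigl\{\,2^{x(n)}\bigl(1+2\,\mathrm{code}(s|_{x(n)+1})\bigr):n\in\om\,\bigr\},\]
identified with its strictly increasing enumeration.

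The three routine checks go as follows. Strict monotonicity in $n$ uses $x(n+1)\geq x(n)+1$, which contributes a factor of at least $2$ in the power-of-two part, while the odd factor is non-decreasing because $s|_{x(n)+1}\sqsubseteq s|_{x(n+1)+1}$ and $\mathrm{code}$ is monotone under extension. Continuity of $e$ is immediate: $e(s,x)(n)$ depends only on $x|_{n+1}$ and on $s|_{x(n)+1}$. For injectivity, read off $x(n)$ as the $2$-adic valuation of $e(s,x)(n)$, recover $s|_{x(n)+1}$ from the odd part via injectivity of $\mathrm{code}$, and let $n\to\infty$ to recover all of $s$.

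The main point is the shift-behaviour, which is the sole reason the specific definition was chosen. A direct calculation gives
\[\mc{S}(e(s,x))(n)=e(s,x)(n+1)=2^{x(n+1)}\bigl(1+2\,\mathrm{code}(s|_{x(n+1)+1})\bigr)=e(s,\mc{S}(x))(n),\]
so $\mc{S}\circ e(s,\cdot)=e(s,\cdot)\circ\mc{S}$ on each fiber. Consequently, if $\mc{S}(e(s,x))=e(s',x')$ then $e(s,\mc{S}(x))=e(s',x')$, and the injectivity of $e$ forces $s=s'$ and $x'=\mc{S}(x)$. Combined with the converse read the other way, this yields $e(s,x)E_{\mc{S}}e(s',x')\iff (s,x)E_{\mc{H}}(s',x')$, i.e.\ $e$ is a graph isomorphism between $\mc{H}$ and $\mc{G}_\mc{S}|_{\ran(e)}$.

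The one delicate design choice, and the whole ``trick'' of the proof, is that the length of the $s$-prefix visible in the $n$-th element must be anchored to $x(n)$, not to $n$. With an anchor at $n$, shifting $e(s,x)$ (which increments the index $n$) would fail to match $e(s,\mc{S}(x))$ (which shifts $x$ but leaves the index alone), and no version of the equivariance identity above could hold. Anchoring to $x(n)$ makes the two operations commute, and everything else is algebra.
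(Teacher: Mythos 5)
Your proposal is correct and is in essence the paper's argument made concrete: setting $e^{0}(s)=\{\,2^{k}\bigl(1+2\,\mathrm{code}(s|_{k+1})\bigr):k\in\om\,\}$, the family $\{e^{0}(s):s\in\oom\}$ is a continuously indexed almost disjoint family, and your $e(s,x)$ is exactly $e^{0}(s)\circ x$, which is the map the paper builds from an abstract almost disjoint family. The anchoring observation you emphasize is precisely what makes composition with $x$ commute with the shift, and together with almost disjointness it is what the paper's one-line proof leaves implicit.
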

	\begin{proof}
		
		Fix a continuous injection $e^0:\oom \to \mc{A}$ such that $\mc{A} \subset \omm$ is an almost disjoint family. For $(s,x) \in \oom \times \omm$ let $e(s,x)= e^0(s) \circ x (=(e^0(s)(i))_{i \in x})$. All the required properties of $e$ are clear  from the fact that $\mc{A}$ is an almost disjoint family.
	\end{proof}
	
	We will use an observation of Di Prisco and the first author that says that the restrictions of the shift graph to non-dominating subsets of $\omm$ have finite Borel chromatic number. In the latter part of the paper a uniform version of this statement is needed, so for the sake of completeness we include a proof of the uniform version. We will use the sets $\mb{BC},\mb{A},\mb{C}$ from Fact \ref{f:prel}. Fix also a homeomorphism $\langle \cdot,\cdot,\cdot \rangle: (\oom)^3 \to \oom$.

	\begin{lemma} (Di Prisco, Todor\v{c}evi\'c, \cite{di2015basis})
		\label{l:nondom} There exists a Borel function $f_{dom}: \omm \to \oom$ so that for each $x\in \omm$ we have $f_{dom}(x)=\langle c_0,c_1,c_2\rangle$ with $c_i \in \mb{BC}(\omm)$, $\mb{A}(\omm)_{c_i}$ are $E_{\mc{S}}$-independent subsets of $\omm$ for every $i$ and $\{y:y \leq^\infty x\} = \bigcup^2_{i=0} \mb{A}(\omm)_{c_i}$.   
	\end{lemma}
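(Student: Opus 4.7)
The plan is to execute the $3$-coloring of Di Prisco–Todorčević uniformly in the parameter $x$, and then convert the resulting Borel color classes into Borel codes using the parametric coding provided by Fact \ref{f:prel}.

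First, I recall that for each fixed $x \in \omm$ the set $\{y \in \omm : y \leq^\infty x\}$ admits a partition into three $E_{\mc{S}}$-independent Borel sets $A_0(x), A_1(x), A_2(x)$ by a combinatorial coloring rule that references only the entries of $y$ and $x$ at the first few indices $n$ for which $y(n) \leq x(n)$. Since this rule is described by a single Borel formula in the pair $(x,y)$, the sets $D_i := \{(x,y) \in \omm \times \omm : y \in A_i(x)\}$ are Borel subsets of $\omm \times \omm$, and for every $x$ the vertical sections $(D_i)_x = A_i(x)$ are $E_{\mc{S}}$-independent and their union equals $\{y : y \leq^\infty x\}$.

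Having this, I apply the third clause of Fact \ref{f:prel} to each $D_i \in \bbo(\omm \times \omm)$ to obtain a Borel map $g_i : \omm \to \oom$ with $\ran(g_i) \subseteq \mb{BC}(\omm)$ and $\mb{A}(\omm)_{g_i(x)} = (D_i)_x = A_i(x)$ for every $x \in \omm$. Setting $f_{dom}(x) := \langle g_0(x), g_1(x), g_2(x) \rangle$ via a recursive homeomorphism $\langle \cdot,\cdot,\cdot \rangle : (\oom)^3 \to \oom$, the map $f_{dom}$ is Borel (as a composition of Borel maps with a continuous tupling function), and each output triple of codes has precisely the three properties demanded by the lemma.

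The main obstacle will be identifying a presentation of the Di Prisco–Todorčević coloring in which the color assignment $(x,y) \mapsto c(x,y) \in \{0,1,2\}$ is visibly Borel-measurable in the pair $(x,y)$, so that the sets $D_i$ are genuinely Borel; once this uniform rule is in hand, the conversion to Borel codes is a routine application of the standard parametric coding of Borel sets.
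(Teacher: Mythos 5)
Your reduction to parametric coding is exactly the paper's final step, and the overall strategy (make the Di Prisco--Todor\v{c}evi\'c coloring uniform in $x$, then convert the Borel color classes to codes via Fact \ref{f:prel}) is correct and matches the paper. However, there is a genuine gap in the middle, one you yourself flag as ``the main obstacle'': you assert that $\{y : y \leq^\infty x\}$ ``admits a partition into three $E_{\mc{S}}$-independent Borel sets \ldots by a combinatorial coloring rule that references only the entries of $y$ and $x$ at the first few indices $n$ for which $y(n)\leq x(n)$,'' but no such direct three-color rule is produced, and the paper's actual construction is not of this form.

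What the paper does is a two-stage argument. It first defines a map $c(x,y)$ into a six-element set $3\times 2$, by a case analysis on how $y(0), y(1)$ interleave with the values of $x$, and shows only the weaker property that $c(x,y)\not=c(x,\mc{S}(y))$ whenever $y\leq^\infty x$; this $c$ is a labeling separating a vertex from its shift, not a $3$-coloring. It then uses the shift graph on $6^{\mathbb{N}}$: the map $y\mapsto (c(x,\mc{S}^j(y)))_{j\in\N}$ is a Borel graph homomorphism into $\mc{G}_{\mc{S}_{6^\om}}$, and composing with a fixed Borel $3$-coloring $d$ of that finite-alphabet shift graph (Kechris--Solecki--Todor\v{c}evi\'c, Theorem 5.1 of \cite{kechris1999borel}) defines the color classes $B_i=\{(x,y):d((c(x,\mc{S}^j(y)))_j)=i\}$. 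Only then does the parametric coding step you describe apply. So the step you identify as ``routine'' is indeed routine, but the step you wave at --- producing the Borel family of $3$-colorings --- is exactly where the substance lies, and it goes through a factorization via a shift on a finite alphabet rather than by a direct local rule assigning a color in $\{0,1,2\}$.
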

	
	\begin{proof}
	    Set $D=\{(x,y):y \leq^\infty x\}.$ Note that it suffices to construct a Borel map $c:D \to 3$ that is a coloring of the graph $\mathcal{G}_\mathcal{S} |_{D_x}$ for each $x$: indeed, we can use Fact \ref{f:prel} for $B_i=\{(x,y):c(x,y)=i\}$ to obtain Borel maps $f_i:\omm \to \oom$ so that for every $x \in \omm$ we have $\mb{A}(\omm)_{f_i(x)}=(B_i)_x$ and let $f_{dom}(x)=\langle f_0(x),f_1(x),f_2(x)\rangle$.
	    
	     We will construct a Borel set $U \subset (\omm)^2$ such that whenever $(x,y) \in D$, then there exists a $i \in \mathbb{N}$ with $(x,\mathcal{S}^i(y)) \in U$ and for each $x$ the set $\{y:(x,y) \in D \cap U\}$ is $\mathcal{G}_\mathcal{S}$-independent. This is enough, as
	     the map 
	     \[c(x,y)=
	     \begin{cases}
	             i \mod 2, \text{ if } (x,y) \not \in U \text{ and $i$ is minimal with $(x,\mathcal{S}^i(y)) \in U$}\\
	             2, \text{ if } (x,y) \in U
	        \end{cases}\]
	    is an appropriate coloring.
	    
	   Now, let $x(-1)=0$ and define $(x,y)\in U \iff$ for the minimal $n \in \mathbb{N}$ with $|[x(2n-1),x(2n+1)) \cap y| \neq 0$ we have that $|[x(2n-1),x(2n+1)) \cap y|$ is even.
	   Observe that if $y \leq^\infty x$ then there exist infinitely many $n$'s such that \[|[x(2n-1),x(2n+1)) \cap y| \geq 2.\] From this, one easily checks that $U$ satisfies the requirements.

	\end{proof}
	
	\begin{lemma}
		\label{l:forH}
		There exists a $\mathbf{\Pi}^0_2$ set $B \subset \oom \times \oom \times \omm$ so that the set $\{s: \chi_B(\mc{H}|_{B_s})<\aleph_0\}$ is $\mb{\Sigma}^1_2$-hard.
	\end{lemma}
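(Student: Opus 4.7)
The plan is to derive Lemma~\ref{l:forH} from Theorem~\ref{t:main2}. Take $X = \omm$, $Y = \N^\N$, $\bbg = \mathbf{\Pi}^0_2$, and define $\Phi : \mathbf{\Pi}^0_2(\omm) \to \bp(\N^\N)$ by
\[
\Phi(A) = \{\langle c_0, c_1, c_2\rangle : c_i \in \mb{BC}(\omm),\ \mb{A}(\omm)_{c_i}\text{ is }E_\mc{S}\text{-independent},\ A \subseteq \bigcup_{i<3}\mb{A}(\omm)_{c_i}\},
\]
so that $\Phi(A)$ collects the codes of Borel $3$-colorings of $\mc{G}_\mc{S}|_A$. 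Using Fact~\ref{f:prel} it is routine to check that $\Phi$ is $\bp$ on $\mathbf{\Pi}^0_2$, and that for $B' \in \mathbf{\Pi}^0_2(\oom \times \omm)$ the condition $B' \in \mc{U}^\Phi$ is equivalent to $\chi_B(\mc{H}|_{B'}) \leq 3$: a Borel uniformization of $\Phi(B')$ assembles exactly into three Borel $E_\mc{H}$-independent sets covering $B'$, and conversely.

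The crux is then to show that $\mc{F}^\Phi$ is \emph{nicely} $\bs$-hard on $\mathbf{\Pi}^0_2$. For this I would combine Lemma~\ref{l:nondom}---which via $f_{dom}$ supplies a uniformly Borel $3$-coloring of any set contained in $\{y : y \leq^\infty x\}$---with a classical reduction from $\bs$ sets to non-dominating closed subsets of $\omm$ (credited in the acknowledgements to Miller). Given $A' \in \bs(\oom)$ written as $A' = \proj_0([T])$ for a tree $T$ on $\N \times \N$, I continuously associate to each $s$ a closed set $B'_s \subseteq \omm$ such that $s \in A'$ forces $B'_s \subseteq \{y : y \leq^\infty x\}$ for some $x$ extracted from a branch of $T_s$ (so that $f_{dom}(x) \in \Phi(B'_s)$), while $s \notin A'$ forces $B'_s$ to contain a Galvin--Prikry cube $[x]^\om$ (so that $\chi_B(\mc{G}_\mc{S}|_{B'_s}) = \aleph_0$). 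Setting $D = \{(s, f_{dom}(x)) : x \in [T_s]\}$ provides the required analytic witnesses with $D \subseteq \Phi(B')$ and $s \in A' \iff D_s \neq \emptyset \iff \Phi(B'_s) \neq \emptyset$, fulfilling Definition~\ref{d:nicely}.

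Theorem~\ref{t:main2} then produces a $\mathbf{\Pi}^0_2$ set $B \subset \oom \times \oom \times \omm$ with $\{s : B_s \in \mc{U}^\Phi\} = \{s : \chi_B(\mc{H}|_{B_s}) \leq 3\}$ being $\mathbf{\Sigma}^1_2$-hard. To conclude $\mathbf{\Sigma}^1_2$-hardness of $\{s : \chi_B(\mc{H}|_{B_s}) < \infty\}$, I note that the diagonal construction of Theorem~\ref{t:main2} assembles every section $(B_s)_t$ out of the $B'$-fibers of the nicely $\bs$-hard step (together with a fixed ``good'' reference fiber), so the chromatic-number dichotomy $\chi_B(\mc{G}_\mc{S}|_{(B_s)_t}) \in \{\leq 3,\,\aleph_0\}$ propagates to all sections. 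Any section with $\chi_B = \aleph_0$ immediately forces $\chi_B(\mc{H}|_{B_s}) = \aleph_0$, and a reduction $f$ of a $\mathbf{\Sigma}^1_2$-complete $A^*$ into $\{s : B_s \in \mc{U}^\Phi\}$ sends $s \notin A^*$ to precisely such a $B_{f(s)}$; thus $f$ also reduces $A^*$ to $\{s : \chi_B(\mc{H}|_{B_s}) < \infty\}$.

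The main technical obstacle is the explicit $\mathbf{\Pi}^0_2$ construction of $B'$ in the nicely $\bs$-hard step: producing a closed encoding of an arbitrary $\bs$ set $A'$ which is non-dominating with analytically extractable witness when $s \in A'$ and contains a Galvin--Prikry cube when $s \notin A'$. This dichotomy is precisely what bridges Theorem~\ref{t:main2}'s ``$\chi_B \leq 3$'' with the lemma's ``$\chi_B < \infty$''.
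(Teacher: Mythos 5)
Your overall plan is right: applying Theorem~\ref{t:main2} with $\Phi$ extracting codes of Borel $3$-colorings, and using non-dominating sections that admit Borel $3$-colorings via $f_{dom}$ versus spoiler sections that force $\chi_B = \aleph_0$. The reduction to nice $\bs$-hardness via a closed set $F$ with $\proj_0(F)=A'$ and the use of Lemma~\ref{l:nondom} matches the paper's route in substance.

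However, your final step has a genuine gap: you claim that when $s\notin A^*$, the corresponding $B_{f(s)}$ must contain a $t$-section with $\chi_B(\mc{G}_\mc{S}|_{(B_{f(s)})_t}) = \aleph_0$, and that this propagates to $\chi_B(\mc{H}|_{B_{f(s)}}) = \aleph_0$. This is false. Inspecting the proof of Lemma~\ref{l:main0}, when $s\notin S$ one still has $D_s \neq \emptyset$ with $D_s \subset \Phi(B_{(s,t)})$ in the crucial case -- so every $t$-section of the set output by Theorem~\ref{t:main2} has $\Phi\neq\emptyset$, i.e.\ is Borel $3$-colorable. The obstruction encoded by $B_s \notin \mc{U}^\Phi$ is not that some section has infinite chromatic number, but that no single Borel function picks out witnessing $3$-colorings uniformly. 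Consequently your argument only gives $\chi_B(\mc{H}|_{B_s}) > 3$, not $\chi_B(\mc{H}|_{B_s}) = \aleph_0$; there could a priori be a Borel $4$- or $5$-coloring.

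The missing ingredient is exactly what the paper supplies via Lemma~\ref{l:emb}: $\mc{H}$ embeds Borel-isomorphically into $\mc{G}_\mc{S}$ by mapping $(s,x) \mapsto e^0(s) \circ x$ for $e^0$ landing in an almost disjoint family, and then \cite[Theorem~5.1]{kechris1999borel} applied to the resulting subgraph of $\mc{G}_\mc{S}$ gives the dichotomy that finite Borel chromatic number already implies $\leq 3$. With that global dichotomy for the whole graph $\mc{H}|_{B_s}$ (not just its fibers), $B_s \in \mc{U}^\Phi \iff \chi_B(\mc{H}|_{B_s})\leq 3 \iff \chi_B(\mc{H}|_{B_s})<\infty$ follows, and the proof goes through. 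You should replace your ``dichotomy propagates to sections'' reasoning with this embedding plus the KST dichotomy.
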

	
	\begin{proof}
		We check the applicability of Theorem \ref{t:main2}, with $X=\omm$, $Y=\oom$, $\bbg=\mathbf{\Pi}^0_2$ and  \[\Phi(A)=\{c:(\forall x,y \in A)\Big(c=\langle c_0,c_1,c_2\rangle \text{, $c_i \in \mb{BC}(\omm)$}, x \in \bigcup_i  \mb{A}(\omm)_{c_i} \]\[ \text{and }xE_{\mc{S}}y \Rightarrow (\forall i)\big(\lnot(x,y \in \mb{A}(\omm)_{c_i})\big)\Big) \},\]
		in other words, $\Phi(A)$ contains the Borel codes of the Borel 3-colorings of $A$.
		First we show that $\Phi$ is $\mathbf{\Pi}^1_1$ on $\mathbf{\Pi}^0_2$. If $B$ is a $\mathbf{\Pi}^0_2$ subset of $\oom \times \omm$, then \[\bar{\Phi}(B)=\{(s,c):(\forall (x,y) \in \omm \times \omm)\Big((s,x)\not \in B \text{ or }(s,y) \not \in B \text{ or } (c=\langle c_0,c_1,c_2\rangle, \]\[  \text{$c_i \in \mb{BC}(\omm)$, $x \in \bigcup_i  \mb{C}(\omm)_{c_i}$ and } xE_{\mc{S}}y \Rightarrow (\forall i)\big(\lnot(x,y \in \mb{A}(\omm)_{c_i})\big)\Big) \},\]
		which set is $\bp$.
		
		Now, we show that $\mc{F}^\Phi$ is nicely $\bs$-hard on $\mathbf{\Pi}^0_2$. Let $A \subset \oom$ be analytic and take a closed set $F \subset \oom \times \omm$ so that $\proj_0(F)=A$. Let \[B=\{(s,y):(\forall x \leq^* y)(x \not \in F_s)\}.\]
		We show that the complement of $B$ is $\mathbf{\Sigma}^0_2$, hence $B \in \mathbf{\Pi}^0_2$. For every $\sigma \in \mathbb{N}^\mathbb{N}$ that is eventually zero define $B'_{\sigma}=\{(s,y):(\exists x \leq y+\sigma)(x \in F_s)\}$. Clearly, $(\oom \times \omm) \setminus B=\bigcup_{\sigma} B'_{\sigma}$, so it is enough to show that each $B'_{\sigma}$ is closed.
		Let $((s_m,y_m))_{m \in \om}\subset B'_{\sigma}$ and suppose that $(s_m,y_m) \to (s,y)$. Then for each $m$ there exists an $x_m \leq y_m+\sigma$ so that $(s_m,x_m) \in F$. For every fixed $n$ we have that the set $\{m:(\exists k \leq n)(x_m(k) > y(k)+\sigma(k))\}$ is finite. Thus, applying K\"onig's Lemma to the tree formed by $\{x_m|_k:k,m\in \mathbb{N}, x_m|_k \leq (y+\sigma)|_k\}$ we get that $(x_m)_{m \in \om}$ contains a convergent subsequence, and its limit witnesses $(s,y)\in B'_{\sigma}$. Let \[D=\{(s,c):s \in A \text{ and }(\exists x \in F_s) (f_{dom} (x)=c)\},\] where $f_{dom}$ is the function from Lemma \ref{l:nondom}. We will show that $B$ and $D$ witness that $\mc{F}^\Phi$ is nicely $\mathbf{\Sigma}^1_1$-hard. We have already seen that $B\in \mathbf{\Pi}^0_2$ and by definition $D$ is analytic. \\           
		Suppose that $s \in A$. Then for each $x' \in F_s$ we have $B_s(= \{y:(\forall x \leq^* y)(x \not \in F_s)\}) \subset \{y:y \leq^\infty x'\}$. Thus, by Lemma \ref{l:nondom} $B_s \in \mc{F}^\Phi$ and $D_s \not = \emptyset$. Moreover, if $c \in D_s$ then for some $x \in F_s$ we have $f_{dom}(x)=c$ with $c=\langle c_0,c_1,c_2\rangle$, again by Lemma \ref{l:nondom} we have $B_s \subset \{y:y \leq^\infty x\} = \bigcup^2_{i=0} \mb{A}(\omm)_{c_i}$  and the sets $\mb{A}(\omm)_{c_i}$ are $E_\mc{S}$-independent, thus, $D_{s} \subset \Phi(B_s)$. Now, if $s \not \in A$ then $F_s=D_s=\emptyset$ and $B_s=\omm$.
		
		So, Theorem \ref{t:main2} is applicable and it yields a Borel set $B \subset \oom \times \oom \times \omm$ so that $\{s:B_s \in \mc{U}^\Phi\}$ is $\mb{\Sigma}^1_2$-hard.
		
		Now we claim that $B_s \in \mc{U}^\Phi$ is equivalent to $\chi_B(\mc{H}|_{B_s})<\aleph_0$. Suppose first that for some $s\in \oom$ we have $\chi_B(\mc{H}|_{B_s})<\aleph_0$. Then, by Lemma \ref{l:emb} $\mc{H}|_{B_s}$ is Borel isomorphic to a subgraph of $\mc{G}_\mc{S}$, so if it has finite Borel chromatic number then it has one $\leq 3$ by \cite{kechris1999borel}. Let $S_0,S_1,S_2$ witness this fact. Using Fact \ref{f:prel} there are Borel maps $f_0,f_1,f_2$ so that for any $t \in \oom$ we have that $f_i(t) \in \mb{BC}(\omm)$ and $\mb{A}(\omm)_{f_i(t)}=(S_i)_{t}$. Clearly, $f=\langle f_0,f_1,f_2 \rangle$ is a Borel uniformization of $\Phi(B_{s})$. For the converse suppose that $\Phi(B_s)$ has a Borel uniformization, $f$. Define $S_i=\{(t,x):x \in \mb{A}(\omm)_{f(t)(i)}\} (=\{(t,x):x \in \mb{C}(\omm)_{f(t)(i)}\})$. The sets $S_i$ are Borel, and for each $t$ the sets $(S_i)_{t}$ form a $3$-coloring of $\mc{H}|_{B_{(s,t)}}$, so by the definition of $\mc{H}$ the sets $S_i$ form a Borel $3$-coloring of $\mc{H}|_{B_s}$.

	\end{proof}
	
	\begin{proof}[Proof of Theorem \ref{t:main1}]
		Consider $e$ from Lemma \ref{l:emb} and apply $(id_{\oom},e)$ to the Borel set given by Lemma \ref{l:forH}. This yields a Borel set $B \subset \oom \times \omm$ so that $\{s:\chi_B(\mc{G}_{\mc{S}}|_{B_s})<\aleph_0\}$ is $\mb{\Sigma}^1_2$-hard. Applying Lemma \ref{l:lift} to this set we get a closed set $C \subset \oom \times \omm$ so that the set $\{s:\chi_B(\mc{G}_\mc{S}|_{C_s})<\aleph_0\}$ is $\mathbf{\Sigma}^1_2$-hard. In order to see that this set is $\mb{\Sigma}^1_2$, similarly to the proof of Lemma \ref{l:forH} just notice that $\{s:\chi_B(\mc{G}_\mc{S}|_{C_s})<\aleph_0\}=$
		\[\{s:(\exists c_0, c_1, c_2)(c_i \in \mb{BC}(\omm),( \forall x,y \in C_s)( x \in \bigcup_i  \mb{C}(\omm)_{c_i} \tag{*}\]\[ \text{and }xE_{\mc{S}}y \Rightarrow (\forall i)(\lnot (x,y \in \mb{A}(\omm)_{c_i} ))) \}. \]       
	\end{proof}
	
	\begin{remark}
		In the proof of Lemma \ref{l:forH} we actually show that the collection of non-dominating $\mathbf{\Pi}^0_2$ sets is $\mb{\Sigma}^1_1$-hard in the codes. The proof presented here is an alternate non-effective version of an unpublished result of Hjorth \cite{hjorth}. A similar argument has been also used by Solovay \cite{solovay}. We would like to mention here that more is true: even the collection of non-dominating closed sets is $\mb{\Sigma}^1_1$-hard in the codes.
	\end{remark}
	
	We conclude this section with proving our main result, Theorem \ref{t:mainintro}. In order to formulate the precise statement we use the set $U^{\mb{\Pi}^0_1}$ for $X=\omm$ from Fact \ref{f:closed}.
	
	\begin{theorem}[Theorem \ref{t:mainintro}]
		The collection of closed subsets of $\omm$ so that $\chi_B(\mc{G}_\mc{S}|_C)<\aleph_0$ (or, equivalently, $\chi_B(\mc{G}_\mc{S}|_C)\leq 3$) is $\mathbf{\Sigma}^1_2$-complete, more precisely, the sets
		
		\begin{enumerate}
			\item \label{st:univ} $\{x \in \oom:\chi_B(\mc{G}_\mc{S}|_{U^{\mathbf{\Pi}^0_1}_x})<\aleph_0\}$
			\item \label{st:effros}$\{C \subset \omm: C \text{ is closed, } \chi_B(\mc{G}_\mc{S}|_{C})<\aleph_0\}$ as a subset of the Effros Borel space,
			
		\end{enumerate}
		are $\mathbf{\Sigma}^1_2$-complete.
		
		\begin{enumerate}
			\item[(3)] \label{st:con} Consequently, there is no sequence of $\mathbf{\Sigma}^1_1$-parametrizable collections of graphs $(\mc{A}_n)_{n \in \mathbb{N}}$ so that for every $C \subset \omm$ closed $\chi_B(\mc{G}_\mc{S}|_C)\geq \aleph_0$ if and only if $\exists (n_i)_{i\in \mathbb{N}}$ and  $A_{n_i} \in \mc{A}_{n_i}$ so that $A_{n_i} \leq_B \mc{G}_\mc{S}|_C$. In particular, there is no one element basis, or countable basis in the sense of Question \ref{q:kech}.
		\end{enumerate}
	\end{theorem}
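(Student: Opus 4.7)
\emph{Plan.} All three parts will follow from Theorem \ref{t:main1} together with the fact (from \cite{kechris1999borel}) that for any Borel $B\subseteq \omm$ the value $\chi_B(\mc{G}_\mc{S}|_B)$ lies in $\{1,2,3,\aleph_0\}$, so ``$<\infty$'' and ``$\leq 3$'' are equivalent.

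For (1), take the closed set $C\subseteq \oom\times \omm$ furnished by Theorem \ref{t:main1} and apply Fact \ref{f:closed} with $P=\oom$ to obtain a Borel $f:\oom\to \oom$ with $U^{\mathbf{\Pi}^0_1}_{f(s)}=C_s$ for every $s$. Then $f$ Borel-reduces the $\mathbf{\Sigma}^1_2$-complete set $\{s:\chi_B(\mc{G}_\mc{S}|_{C_s})<\infty\}$ to the set in (1), so the latter is $\mathbf{\Sigma}^1_2$-hard. $\mathbf{\Sigma}^1_2$-membership comes from the formula $(*)$ at the end of the proof of Theorem \ref{t:main1}, applied with $U^{\mathbf{\Pi}^0_1}_x$ in place of $C_s$: the inner matrix is $\mathbf{\Pi}^1_1$, the universal quantifier over $x,y\in U^{\mathbf{\Pi}^0_1}_x$ preserves $\mathbf{\Pi}^1_1$ (since $U^{\mathbf{\Pi}^0_1}_x$ is $\mathbf{\Pi}^0_1$ in $x$), and the outer existential over $(c_0,c_1,c_2)$ produces $\mathbf{\Sigma}^1_2$.

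For (2), the map $x\mapsto U^{\mathbf{\Pi}^0_1}_x$ is a Borel map from $\oom$ into the Effros Borel space $F(\omm)$ (a standard consequence of $U^{\mathbf{\Pi}^0_1}$ being a good universal closed set), and it Borel-reduces the set in (1) to the set in (2), giving $\mathbf{\Sigma}^1_2$-hardness. For the upper bound, Fact \ref{f:closed} applied with $P=F(\omm)$ to the evaluation relation furnishes a Borel selector $K\mapsto x_K\in \oom$ with $U^{\mathbf{\Pi}^0_1}_{x_K}=K$, realizing (2) as the Borel preimage of (1) and so $\mathbf{\Sigma}^1_2$.

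For (3), argue by contradiction. If a sequence $(\mc{A}_n)_{n\in\N}$ of $\mathbf{\Sigma}^1_1$-parametrizable families provided such a basis, then, unwinding the definitions of $\mathbf{\Sigma}^1_1$-parametrizability and Borel reducibility, the assertion ``there exist $(n_i)_{i\in\N}$ and $A_{n_i}\in\mc{A}_{n_i}$ with $A_{n_i}\leq_B \mc{G}_\mc{S}|_C$'' becomes the existential quantification, over real parameters and Borel codes of homomorphisms, of a $\mathbf{\Pi}^1_1$ matrix; it is thus $\mathbf{\Sigma}^1_2$ in $C$. Its equivalence with $\chi_B(\mc{G}_\mc{S}|_C)\geq\aleph_0$ would make the set in (2) $\mathbf{\Pi}^1_2$, but together with its $\mathbf{\Sigma}^1_2$-hardness proved above this yields $\mathbf{\Sigma}^1_2=\mathbf{\Pi}^1_2$, contradicting the ZFC-provable non-selfduality obtained by diagonalizing a universal $\mathbf{\Sigma}^1_2$ set. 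The main subtlety here is a careful accounting of the quantifier complexity: one must check that unwinding $\mathbf{\Sigma}^1_1$-parametrizability together with ``$\exists$ Borel code of a reduction'' contributes only one block of real existentials over a $\mathbf{\Pi}^1_1$ matrix, so the whole condition stays within $\mathbf{\Sigma}^1_2$.
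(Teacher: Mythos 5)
Your treatment of (1) matches the paper exactly, and your outline of (3) is in essence the paper's argument (the paper works with the parametrized version from Theorem \ref{t:main1} rather than with the Effros set, but it is the same $\mathbf{\Sigma}^1_2\neq\mathbf{\Pi}^1_2$ contradiction; you correctly flag that the real work is the quantifier accounting, which the paper does in detail, including the reduction-theorem argument to show the ``$\exists ! z$'' clause is $\mathbf{\Sigma}^1_1$).

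There is, however, a genuine gap in your argument for (2). You claim that $x\mapsto U^{\mathbf{\Pi}^0_1}_x$ is a Borel map from $\oom$ into the Effros Borel space $F(\omm)$ and use it as a Borel reduction. This is false for \emph{any} good universal closed set. If the section map were Borel, then in particular $\{x : U^{\mathbf{\Pi}^0_1}_x \neq\emptyset\}$ would be Borel; but then by Fact \ref{f:closed}, for every closed $C\subset P\times\omm$ the set $\{p : C_p\neq\emptyset\}=f^{-1}\bigl(\{x : U^{\mathbf{\Pi}^0_1}_x\neq\emptyset\}\bigr)$ would be Borel, contradicting the existence of closed sets whose projection is a properly $\mathbf{\Sigma}^1_1$ set (e.g.\ ill-founded trees). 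In fact, for closed $C$ the preimages of the basic Effros sets $\{F : F\cap U\neq\emptyset\}$ under $x\mapsto C_x$ are only $\mathbf{\Sigma}^1_1$, not Borel, and this is precisely the obstacle. The paper handles this by invoking Lemma \ref{l:sabok}, which is derived from Sabok's theorem on $\mathbf{\Sigma}^1_1\cup\mathbf{\Pi}^1_1$-submeasurable maps; that theorem shows that submeasurability with respect to this coarser pointclass still suffices to transfer $\mathbf{\Sigma}^1_2$-hardness, even though the map is not a Borel reduction. You need some version of that result (or another device) to obtain the Effros-space hardness; a literal Borel reduction is not available. Your route for the \emph{upper} bound of (2) via a Borel selector $K\mapsto x_K$ is fine (taking the pruned-tree presentation of $F(\omm)$ the evaluation relation $\{(T,y): y\in [T]\}$ is closed), though the paper simply computes the complexity of (2) directly from the formula $(*)$.
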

	
	In order to show the statement that talks about the closed sets with the Effros Borel structure we state a general lemma which essentially follows from the work of Sabok \cite{sabok2012complexity}.

	\begin{lemma}
		\label{l:sabok}
		Suppose that $P$ is a property of closed subsets of $\oom$ and there exists a closed set $C \subset \oom \times \oom$ so that $\{x \in \oom:C_x \text{ has } P \}$ is $\mathbf{\Sigma}^1_2$-hard. Then $\{F \subset \oom:F \text{ has } P\}$ is also $\mb{\Sigma}^1_2$-hard as a subset of the Effros Borel space.
	\end{lemma}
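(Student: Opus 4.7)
The plan is to construct, for each $\mathbf{\Sigma}^1_2$ set $A \subset \oom$, a Borel map $h : \oom \to F(\oom)$ into the Effros Borel space of closed subsets of $\oom$ such that $x \in A$ if and only if $h(x)$ has property $P$; this witnesses $\mathbf{\Sigma}^1_2$-hardness of $\{F : F \text{ has } P\}$ in the Effros Borel space. By the hypothesis applied to $A$, there is a Borel reduction $g : \oom \to \oom$ satisfying $x \in A \iff C_{g(x)} \text{ has } P$, which suggests taking $h(x) := C_{g(x)}$. The lemma thus reduces to establishing that the section assignment $\Psi : \oom \to F(\oom)$, $\Psi(y) := C_y$, is Borel measurable into the Effros Borel structure.

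To verify this, by the definition of the Effros Borel structure, one must check that $\{y \in \oom : C_y \cap N_s \neq \emptyset\}$ is Borel for each basic open $N_s \subset \oom$. Following Sabok \cite{sabok2012complexity}, the strategy is to produce a countable family $(\sigma_n)_{n \in \N}$ of partial Borel functions $\sigma_n : \oom \rightharpoonup \oom$, each with graph contained in $C$, such that for every $y \in \oom$ the set of defined values $\{\sigma_n(y) : n \in \N,\ \sigma_n(y) \text{ is defined}\}$ is dense in $C_y$. Given such a family, one has
\[\{y : C_y \cap N_s \neq \emptyset\} = \bigcup_n \{y : \sigma_n(y) \in N_s\},\]
a Borel set as a countable union of Borel sets, giving Borel measurability of $\Psi$. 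The composition $h = \Psi \circ g$ is then the desired Borel reduction.

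The main obstacle is the construction of the dense Borel family $(\sigma_n)_n$. The natural approach applies the Kuratowski--Ryll-Nardzewski selection theorem to the Borel sets $C \cap (\oom \times V_k)$ corresponding to a countable clopen basis $(V_k)$ of $\oom$, but this is obstructed by a circularity: the selection theorem requires a Borel domain of nonempty sections, whereas the natural candidate $\{y : C_y \cap V_k \neq \emptyset\}$ is the very \emph{a priori} analytic set one wishes to show is Borel. Sabok's technique resolves this circularity by working at the level of tree representations of $C$ and extracting the dense family via a careful selection procedure that does not require verifying Borelness of such domains in advance. This is the core technical content of the lemma, and the details follow the methods of \cite{sabok2012complexity}; once $\Psi$ is known to be Borel, the proof is complete.
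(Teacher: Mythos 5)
Your plan hinges on showing that the section map $\Psi\colon y \mapsto C_y$ is Borel from $\oom$ to the Effros Borel space, and you correctly identify that the obstacle is the set $\{y : C_y \cap N_s \neq \emptyset\}$. But this obstacle is not technical, it is fundamental: for a closed $C \subset \oom \times \oom$, $\{y : C_y \cap N_s \neq \emptyset\}$ is the projection of a closed set and is therefore only $\mathbf{\Sigma}^1_1$; one can arrange it to be a properly analytic set, so $\Psi$ is genuinely not Borel in general. The circularity you observe (needing to know $\{y : C_y \cap V_k \neq\emptyset\}$ is Borel to run a selection argument that would show it is Borel) has no resolution, because the conclusion is simply false. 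You attribute to \cite{sabok2012complexity} a method for extracting a dense Borel selection family ``at the level of tree representations'' which would make $\Psi$ Borel without knowing those sections in advance; no such method exists in that paper.

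What the paper actually does is quite different and avoids the issue entirely. It invokes Sabok's Theorem~2 of \cite{sabok2012complexity}, whose content is a transfer principle: $\mathbf{\Sigma}^1_2$-hardness is preserved along a map that is merely $\mathbf{\Sigma}^1_1 \cup \mathbf{\Pi}^1_1$-submeasurable, meaning preimages of subbasic open sets lie in $\mathbf{\Sigma}^1_1 \cup \mathbf{\Pi}^1_1$ rather than in the Borel sets. The paper then verifies this weaker property directly: $\Psi^{-1}(\{F : F \cap [\sigma] \neq \emptyset\}) = \{y : (\exists z \in [\sigma])\, (y,z)\in C\}$ is $\mathbf{\Sigma}^1_1$, and consequently $\Psi^{-1}(\{F : F \cap [\sigma] = \emptyset\})$ is $\mathbf{\Pi}^1_1$, which is all that is needed. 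So the missing ingredient in your argument is precisely the realization that one should not try to make the reduction Borel but should instead use the submeasurability transfer principle. Without that, the proof does not go through.
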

	
	\begin{proof}
		Consider the map $f:\oom \to F(\oom)$ given by $f(x)=C_x$. As usual, we can identify $F(\oom)$ with the collection of pruned trees on $\om^{<\om}$, hence it becomes a Borel subset of $2^{\om^{<\om}}$, let us endow $F(\oom)$ with the inherited topology. By \cite[Theorem 2]{sabok2012complexity} it is enough to show that $f$ is $\mb{\Sigma}^1_1 \cup \bp$-submeasurable, that is, there exists a subbase $\mc{B}$ of $F(\oom)$ so that for any $U \in \mc{B}$ we have that $f^{-1}(U) \in \mb{\Sigma}^1_1(\oom) \cup \bp(\oom)$. A subbase for this space can be given in the form $\{F \in F(\oom):F \cap [\sigma]\not = \emptyset\}$ and  $\{F \in F(\oom):F \cap [\sigma] = \emptyset\}$, where $\sigma \in \N^{n}$ and $[\sigma]=\{r \in \N^\N: \sigma =r|_n\}$ for some $n \in \N$. Clearly, for each $\sigma$ the set $f^{-1}(\{F:F \cap [\sigma]= \emptyset\})$ is $\bp$, hence, it is enough to show the set $f^{-1}(\{F:F \cap [\sigma]\not = \emptyset\})=\{x:C_x \cap [\sigma]\not = \emptyset\}=\{x:(\exists y)((x,y)\in C, y \in [\sigma])\}$ is $\mb{\Sigma}^1_1$, which is obvious.
		
	\end{proof}
	
	\begin{proof}[Proof of Theorem \ref{t:mainintro}]
		In \eqref{st:univ} and \eqref{st:effros} the fact that the sets are $\mb{\Sigma}^1_2$ can be easily seen directly, similarly to (*). Moreover, Lemma \ref{l:sabok} and the fact that $\oom$ is homeomorphic to $\omm$ shows that \eqref{st:univ} implies \eqref{st:effros}.
		
		Take the set $C$ from Theorem \ref{t:main1}. For \eqref{st:univ} notice that by Fact \ref{f:closed} there exists a Borel map $f:\oom \to \oom$ so that for any $s$ we have $U^{\mb{\Pi}^0_1}_{f(s)}=C_s$. Then $\{s \in \oom:\chi_B(\mc{G}_\mc{S}|_{C_s})<\aleph_0\}=f^{-1}(\{s \in \oom:\chi_B(\mc{G}_\mc{S}|_{U^{\mathbf{\Pi}^0_1}_s})<\aleph_0\}),$ which shows that the latter set is $\mathbf{\Sigma}^1_2$-complete.
		
		For the last statement, suppose that such collection of $\mc{A}_i$'s exists with the appropriate parametrizations $E_i \subset X_i \times Y^2_i$. Then,    
		$\{s \in \oom:\chi_B(\mc{G}_\mc{S}|_{C_s})<\aleph_0\}=\{s \in \oom: (\forall (n_i)_{i \in \N} \text{ sequence of naturals})(\exists i \in \N)(\forall c \in \oom)(\forall x \in X_{n_i})$
		
		\begin{enumerate}[label=(\alph*)]
			\item \label{c:isol} $(Y_{n_i},(E_{n_i})_x)$ is not Borel isomorphic to a graph in $\mc{A}_{n_i}$ or
			\item \label{c:codesl} $c \not \in \mathbf{BC}(Y_{n_i} \times \omm)$ or

			\item \label{c:nontrivl} $\exists y \in Y_{n_i}$ so that $\lnot ((\exists ! z)((y,z)\in \mathbf{A}(Y_{n_i} \times \omm)_c))$ or
			
			\item \label{c:homoml} $\exists (y,y')\in (E_{n_i})_x,\exists z,z' \in \omm$ so that $(y,z)$, $(y',z') \in \mathbf{A}(Y_{n_i} \times \omm)_c$ and $(z,z') \in C^2_s \setminus E_\mc{S}\}$.
		\end{enumerate}  
		Note that \ref{c:nontrivl} and \ref{c:homoml} express that $c$ does not code a total function and that the function coded is not a homomorphism from $(Y_{n_i},(E_{n_i})_x)$ to $\mathcal{G}_\mathcal{S}|_{C_s}$. Clearly, the formula \ref{c:isol} is $\bp$ and the formulas \ref{c:codesl} and \ref{c:homoml} are $\mathbf{\Sigma}^1_1$. It follows from the (uniform version of) Luzin's unicity theorem \cite[Theorem 18.11]{kechrisbook} that \ref{c:nontrivl} is $\bs$ (an alternative proof can be given using \eqref{f:kleene} of Fact \ref{f:effective}).
		
		Consequently, the existence of the families $(\mc{A}_n)_{n \in \N}$ would imply that the set $\{s \in \oom:\chi_B(\mc{G}_\mc{S}|_{C_s})<\aleph_0\}$ is $\mathbf{\Pi}^1_2$, contradicting Theorem \ref{t:main1}.
	\end{proof}
	\section{Relation to Hedetniemi's conjecture and open problems}
	
	In this section we collect several open problems and discuss the relation of our results to Hedetniemi's conjecture.
	
	Let $\mc{G}=(X,E)$ and $\mc{G'}=(X',E')$ be Borel graphs. The \textit{product of the graphs $\mc{G}$ and $\mc{G}'$}, $\mc{G} \times \mc{G'}$, is
	the graph $(X\times X', E_{\mc{G} \times \mc{G'}})$, where $(x,x') E_{\mc{G} \times \mc{G'}} (y,y') \iff  (xEy\text{ and } x'E'y')$. It is clear that  $\mc{G} \times \mc{G'}$ is a Borel graph and note also that $\chi_B(\mc{G} \times \mc{G'}) \leq \min \{\chi_B(\mc{G}),\chi_B(\mc{G}')\}$. (The Borel version of) Hedetniemi's conjecture is the statement \[\chi_B(\mc{G} \times \mc{G'}) = \min \{\chi_B(\mc{G}),\chi_B(\mc{G}')\}.\]

	The classical Hedetniemi's conjecture is the above statement for finite graphs (and thus with usual chromatic numbers). Clearly, the Borel version of the conjecture for graphs with finite Borel chromatic numbers implies the classical one. However, there are substantial differences between the Borel and classical cases for infinite chromatic numbers.
	
	On the one hand, note that if for some graph $\mc{G}''$ we have $\mc{G}'' \leq_B \mc{G}, \mc{G'}$ then $\mc{G}'' \leq_B \mc{G} \times \mc{G'}$, thus, in such a situation $\chi_B(\mc{G}'')$ gives a lower bound for the value $\chi_B(\mc{G} \times \mc{G'})$. For instance, the $\mathbb{G}_0$ dichotomy implies  Hedetniemi's conjecture for analytic graphs of Borel chromatic number $>\aleph_0$.
	
	On the other hand, it has been proved by Hajnal \cite{hajnal} that there exist graphs $G$ and $G'$ so that $\chi(G)=\chi(G')=\aleph_1$, but $\chi(G \times G')<\aleph_1$. Moreover, it has been shown in \cite{kechris1999borel} that it is consistent that there exist graphs $\mc{G},\mc{G}'$ with coanalytic edge relation such that $\chi_B(\mc{G}),\chi_B(\mc{G}')>\aleph_0$, but $\chi_B(\mc{G}\times \mc{G}') \leq \aleph_0$.
	
	Note also that a compactness argument implies that if $\chi(G)=\aleph_0$ and $\chi(G')=n$ then the conjecture holds.

	Concerning the conjecture for finite graphs\footnote{In a more recent development, Shitov \cite{shitov} gave a counterexample to Hedetniemi's conjecture. This of course gives a counterexample for the Borel version as well.} it is known that for any $n>2$ there are graphs with chromatic number $n$ and arbitrarily high odd girth, thus, there is no finite graph $H$ with $\chi(H)=n$ that would admit a homomorphism to each finite $G$ with $\chi(G) \geq n$. So, Hedetniemi's conjecture cannot be solved by a basis result in the collection of finite graphs (see e. g. \cite{sauer2001hedetniemi}). However, we would like to remark that the finite conjecture is in fact equivalent to a basis result if we are allowed to consider infinite graphs and the right notion of chromatic number:
	
	\begin{remark}
		\label{r:hedet}
		Let $(G_i)_{i \in \om}=((V_i,E_i))_{i \in \om}$ be an enumeration of all the finite graphs with chromatic number $n$. Let $\mc{G}_\infty$ be their infinite product, that is, $\mc{G}_\infty=(\prod_i V_i, E_{\prod_i G_i})$, where $(v_0,v_1,\dots )E_{\prod_i G_i} (v'_0,v'_1,\dots )$ if and only if for every $i \in \om$ we have $v_iE_iv'_i$. $\mc{G}_\infty$ is a Borel graph with a closed edge relation and it is not hard to see that Hedetniemi's conjecture for $n$ implies that $\mc{G}_\infty$ has clopen chromatic number $n$. Conversely, since $\mc{G}_\infty$ admits a continuous homomorphism into each $G_i$, if the clopen chromatic number of $\mathcal{G}_\infty$ is $n$, then Hedetniemi's conjecture holds for $n$.
	\end{remark}
	
	Unfortunately, it is not clear whether it is possible to turn antibasis results to counterexamples to the Borel version of Hedetniemi's conjecture. But, if one considers $\Delta^1_1$-measurable colorings instead of Borel ones our construction yields an example, which works for a rather simple reason: there exist $\Delta^1_1$ sets $B$ and $C$ so that $\mc{G}_\mc{S}|_B$ has a finite Borel chromatic number but has no finite $\Delta^1_1$ coloring and $C$ contains reals which code finite Borel colorings of $B$, hence the $\Delta^1_1$ chromatic number of the product graph will be finite.
	\begin{proposition}
		\label{p:hedet}
		There exist sets $B,C \in \Delta^1_1(\omm)$ so that $\chi_B(\mc{G}_\mc{S}|_C)=\aleph_0$,  $\mc{G}_\mc{S}|_B$ has no $\Delta^1_1$ finite coloring, but the product $\mc{G}_\mc{S}|_B \times \mc{G}_\mc{S}|_C$ has a $\Delta^1_1$ 3-coloring.
	\end{proposition}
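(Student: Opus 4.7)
The plan is to exploit the gap between Borel and lightface $\Delta^1_1$ definability. First I would construct $B \in \Delta^1_1(\omm)$ with $\chi_B(\mc{G}_\mc{S}|_B) \leq 3$ but no $\Delta^1_1$ finite coloring of $\mc{G}_\mc{S}|_B$, via the effective diagonalization of Lemma~\ref{l:main0}. Then I would choose $C \in \Delta^1_1(\omm)$ so that each $y \in C$ carries enough $\Delta^1_1(y)$-computable information to recover a Borel 3-coloring of $B$, while $\mc{G}_\mc{S}|_C$ has infinite Borel chromatic number. The product coloring $c(x,y)$ is $\Delta^1_1$ by construction.

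For Step 1, I would apply Lemma~\ref{l:main0} to the map $\Phi : \mathbf{\Pi}^0_2(\omm) \to \mathbf{\Pi}^1_1(\omm)$ from the proof of Lemma~\ref{l:forH}, sending $A$ to the set of Borel codes of 3-colorings of $\mc{G}_\mc{S}|_A$. With $S = \emptyset$ and a recursive parameter choice, this yields a lightface $\Pi^0_2$ (hence $\Delta^1_1$) set $B \subset \omm$ with $\Phi(B)$ nonempty but $\Phi(B) \cap \Delta^1_1 = \emptyset$. The Kechris--Solecki--Todor\v{c}evi\'c theorem reduces any $\Delta^1_1$ finite coloring to a $\Delta^1_1$ 3-coloring, which would give a $\Delta^1_1$ code in $\Phi(B)$; the emptiness of the intersection rules this out.

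For Steps 2 and 3, I would take $C \in \Delta^1_1(\omm)$ to be a set such that every $y \in C$ satisfies $B \subset \{x : x \leq^\infty y\}$ (i.e., $y$ dominates $B$ in a mild sense) and $\chi_B(\mc{G}_\mc{S}|_C) = \aleph_0$. For such $y$, Lemma~\ref{l:nondom} yields a Borel 3-coloring of $\{x : x \leq^\infty y\} \supset B$ with code $f_{dom}(y)$, a Borel function of $y$. I would define the product coloring as $c(x,y) = d((c_0(\mc{S}^j(y), \mc{S}^j(x)))_{j \in \omega})$, where $c_0$ is the 6-valued function from the proof of Lemma~\ref{l:nondom} and $d$ is a fixed Borel 3-coloring of the shift graph on $6^\omega$. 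Then $c$ is $\Delta^1_1$ as a function of $(x,y)$, and shifting $x$ and $y$ simultaneously shifts the $6^\omega$-sequence, allowing $d$ to distinguish the colors on the diagonal product edges.

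The main obstacle is a delicate tension: building $C \in \Delta^1_1$ with every $y \in C$ dominating $B$ while $\chi_B(\mc{G}_\mc{S}|_C) = \aleph_0$ cannot be done by the naive choice $C = \{y : x^* \leq^* y\}$ for a fixed dominator $x^*$ of $B$, since this would force $x^* \in \Delta^1_1$, which (via Lemma~\ref{l:nondom}) produces a $\Delta^1_1$ coloring of $B$ and contradicts Step 1. Resolving this requires an effective construction where $B$ and $C$ are built simultaneously, likely by enriching the diagonalization in Step 1 to produce compatible $B$ and $C$; this interplay is the heart of the proof. A secondary issue is verifying that the coloring $c$ handles all four types of product edges (both diagonal and anti-diagonal shifts in the two coordinates), which I expect follows from the shift properties of $c_0$ and $d$, possibly after passing to a richer alphabet.
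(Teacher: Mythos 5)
Your proposal correctly identifies the central obstruction — you cannot simply take $C$ to be a $\Delta^1_1$ cube above a fixed dominator of $B$, since any such dominator in $\Delta^1_1$ would, via Lemma~\ref{l:nondom}, yield a $\Delta^1_1$ $3$-coloring of $B$ and destroy Step~1 — but you do not resolve it. You gesture at "building $B$ and $C$ simultaneously" and "enriching the diagonalization"; that is precisely the hole. The paper sidesteps the whole issue with a much lighter trick: pick $A \in \Sigma^1_1(\N) \setminus \Pi^1_1(\N)$ and $C'' \in \Pi^0_1(\N \times \omm)$ with $\proj_0(C'') = A$, and set $B''_n = \{y : (\forall x \leq^* y)(x \notin C''_n)\}$. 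The set $\{n : \mc{G}_\mc{S}|_{B''_n}\text{ has a finite }\Delta^1_1\text{ coloring}\}$ is $\Pi^1_1$ and contained in $A$ (for $n \notin A$, $B''_n = \omm$ and has no finite coloring at all); since $A$ is not $\Pi^1_1$, the inclusion is strict, so some $n \in A$ gives $B = B''_n$ with a finite Borel but no finite $\Delta^1_1$ coloring. The witnesses to domination are then exactly the elements of $C''_n$ — an entire $\Delta^1_1$ closed set of dominators is available without any one of them being $\Delta^1_1$, and the proposition's $C$ is built from $C''_n \times \omm$ (embedded via $\mc{H}$ and Lemma~\ref{l:emb}). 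This is why no parametrized diagonalization through Lemma~\ref{l:main0} is needed.

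There is a second, smaller issue. You keep $C \subset \omm$ from the start and define the product coloring by shifting both coordinates simultaneously, raising the mixed-shift verification you flag at the end. The paper avoids this: it works with $\mc{H} = (\oom \times \omm, E_\mc{H})$ where an $E_\mc{H}$-edge forces equality of the first coordinate, so along any edge of $\mc{H}|_C \times \mc{G}_\mc{S}|_B$ the dominating parameter $x$ is constant, and the coloring is simply the $x$-dependent $3$-coloring of $B$ applied to the $B$-coordinate. The passage to genuine subgraphs of $\mc{G}_\mc{S}$ is then handled uniformly by the almost-disjoint-family embedding of Lemma~\ref{l:emb}, so the mixed-shift complications you anticipated never arise. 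Also, your Step~1 as written applies the boldface Lemma~\ref{l:main0} with $S=\emptyset$ and asks for "a recursive parameter choice"; the lemma's conclusion only gives, for each $s$, the existence of some $t$ with the $\Delta^1_1(q,s,t)$-avoidance property, so extracting a single lightface $\Delta^1_1$ set with lightface $\Delta^1_1$-avoidance would require re-proving an effective version, not merely instantiating parameters.
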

	\begin{proof}[Proof sketch]
		Instead of constructing a set $C \subset \omm$ we construct a set $C \subset \oom \times \omm$ and prove that $\mc{H}|_{C}$ and $\mc{G}_\mc{S}|_B$ has the required properties, from this it is easy to deduce the proposition using Lemma \ref{l:emb}.
		
		Pick a set $A \in \Sigma^1_1(\om) \setminus \Pi^1_1(\om)$ and a set $C'' \in \Pi^0_1(\om \times \omm)$ so that $\proj_{0}(C'')=A$. Let \[B''=\{(n,y):(\forall x \leq^* y)(x \not \in C''_n)\}.\]   
		It is not hard to check (similarly to the proof of Lemma \ref{l:forH}) that $B'' \in \Delta^1_1(\om \times \omm)$, the set $\{n: \mc{G}_\mc{S}|_{B''_n} \text{ has a finite}$ $\text{ $\Delta^1_1$ coloring}\} $ is $\Pi^1_1$ and $A$ contains this set. Consequently, for some $n \in A$ we have that $B''_n$ has no $\Delta^1_1$ finite coloring, let $C=\{(x,r):x \in C''_n, r\in \omm\}$ and $B=B''_n$. Then clearly $C \in \Delta^1_1(\oom \times \omm)$ and as $C''_n$ is nonempty, $\chi_B(\mc{H}|_C)=\aleph_0$. Note now that for every $(x,r) \in C$ clearly $B \subset \{y:y \leq^\infty x\}$. Thus, by (the lightface version of) Lemma \ref{l:nondom} for each $(x,r) \in C$ the graph $\mc{G}|_B$ has a $\Delta^1_1(x,r)$ $3$-coloring. Using \ref{f:kleene} of Fact \ref{f:effective}, the graph $\mc{H}|_{C} \times \mc{G}_\mc{S}|_{B}$ has a $\Delta^1_1$ $3$-coloring: we can construct a coloring from the $\Delta^1_1(x,r)$-colorings uniformly.
	\end{proof}
	As we have seen, Theorem \ref{t:mainintro} excludes the possibility of a simple Borel/analytic basis. However, the following is still possible:
	\begin{question}
		Does there exist a graph $\mc{G}=(X,E)$ where $X$ is a Polish space and $E$ is a $\mathbf{\Pi}^1_1$ edge relation so that for any Borel graph $\mc{G}'$ we have $\chi_B(\mc{G}') \geq \aleph_0$ if and only if $\mc{G} \leq_B \mc{G}'$?
	\end{question}
	
	Note that the above question makes sense even with some finite number instead of $\aleph_0$. A possibility of a positive answer is even more intriguing in the light of Remark \ref{r:hedet}: it would be very interesting if in both cases the large chromatic number of a certain class of graphs was witnessed by a graph outside of this class.

	On the other hand we don't know whether the idea of Proposition \ref{p:hedet} can be turned to a counterexample to the Borel version Hedetniemi's conjecture.
	
	\begin{question} Do there exist Borel subgraphs $\mc{G}, \mc{G}'$ of $\mathcal{G}_S$ so that $\chi_B(\mc{G}\times \mc{G}')<\min\{\chi_B(\mc{G}),\chi_B(\mc{G}')\}$?    \end{question}
	
	A fundamental tool for the investigation of Hedetniemi's conjecture is the $n$-coloring graph $C_n(G)$ of a graph $G$ defined by El-Zahar and Sauer \cite{zahar}. It is not clear, however, whether there exist analogous well-behaving objects for Borel graphs.
	
	\begin{problem}
		Let $\mc{G}$ be a Borel graph. Define a graph $C_n(\mc{G})$ of $n$-colorings of $\mc{G}$ for which the results of El-Zahar and Sauer \cite{zahar} can be generalized.
	\end{problem}

	One could hope for a positive result after excluding the sort of examples constructed in this paper. More precisely, our example can be viewed as follows: a smooth equivalence relation $E$ has been constructed so that there are no $E_\mc{S}$ edges between the classes (in other words $E$ is a smooth super-equivalence relation of a restriction of $E_0$ to some Borel set) and each $E$ class has finite Borel chromatic number, but the union of $E$ classes has infinite Borel chromatic number. Note also that such a graph still has a $\mb{\Sigma}^1_2$-measurable finite coloring. Hence, the following questions are natural:
	
	\begin{question} Let $B \subset \omm$ be an $E_0$-invariant Borel set (that is, it is the union of $E_\mc{S}$ connected components).
		\begin{enumerate}
			\item Suppose that there is no smooth super-equivalence relation $E$ of $E_0|_B$ so that for every $x \in B$ we have $\chi_B(\mc{G}_\mc{S}|_{[x]_E})<\aleph_0$. Does $\mc{G}_\mc{S} \leq_B \mc{G}_\mc{S}|_B$ hold?
			\item (PD) Can we formulate basis results for graphs without finite ``definable" colorings? For instance, suppose that the graph $\mc{G}_\mc{S}|_B$ has no projective finite coloring. Does $\mc{G}_\mc{S} \leq_B \mc{G}_\mc{S}|_B$ hold?
		\end{enumerate}
	\end{question}
	
	Finally, from an affirmative answer to the following question one could give a different proof of Theorem \ref{t:main1}, inferring it directly from Corollary \ref{c:kechadams}.
	
	\begin{question}
		\label{q:unif} Let $B \subset \oom \times [\om]^\om$ be a Borel set such that for all $x \in \oom$ the graph $\mc{G}_\mc{S}|_{B_x}$ has finite Borel chromatic number. Does $B^c$ have a full Borel uniformization?
		
	\end{question}

	\bibliographystyle{apalike}
	\bibliography{anti}

	\newpage
	
	Stevo Todor\v{c}evi\'c
	
	Institut de Math\'emathiques de Jussieu,
	
	CNRS UMR 7586, Case 247,
	
	4 place Jussieu, 75252 Paris Cedex, France
	
	stevo@math.jussieu.fr
	
	and  
	
	Department of Mathematics, University of Toronto,
	
	Toronto, Canada M5S 3G3
	
	stevo@math.toronto.edu
	
	\bigskip
	
	Zolt\'an Vidny\'anszky
	
	Department of Mathematics, 
	
	California Institute of Technology,
	
    Pasadena, CA 91125
	
	vidnyanz@caltech.edu
	
\end{document}